\documentclass[11pt]{amsart}

\usepackage[utf8]{inputenc}
\usepackage[T1]{fontenc}

\usepackage{amsmath,amssymb,amsfonts,textcomp,amsthm,xifthen,graphicx,color,pgfplots}
\usepackage{ifthen}

\usepackage{enumerate}
\usepackage{enumitem}

\usepackage{fullpage}

\usepackage[utf8]{inputenc}


\usepackage{float} 
\usepackage{array}

\usepackage{multirow}

\usepackage{etoolbox}
\newcounter{rowcntr}[table]
\renewcommand{\therowcntr}{\alph{rowcntr})}

\newcolumntype{N}{>{\refstepcounter{rowcntr}\therowcntr}c}

\AtBeginEnvironment{tabular}{\setcounter{rowcntr}{0}}

\newtheorem{theorem}{Theorem}
\newtheorem{lemma}[theorem]{Lemma}
\newtheorem{corollary}[theorem]{Corollary}






\def\enorm#1{|\hspace*{-.5mm}|\hspace*{-.5mm}|#1|\hspace*{-.5mm}|\hspace*{-.5mm}|}


\newcommand{\ip}[2]{(#1\hspace*{.5mm},#2)}
\newcommand{\dual}[2]{\langle#1\hspace*{.5mm},#2\rangle}

\newcommand{\norm}[3][]{#1\|#2#1\|_{#3}}

\def\div{{\rm div\,}}

\newcommand{\Hdivset}[1]{\boldsymbol{H}(\div;#1)}


\newcommand{\set}[2]{\big\{#1\,:\,#2\big\}}

\newcommand{\pwnabla}{\nabla_\TT}

\newcommand{\R}{\ensuremath{\mathbb{R}}}
\newcommand{\N}{\ensuremath{\mathbb{N}}}

\newcommand{\nn}{\ensuremath{\mathbf{n}}}
\newcommand{\vv}{\ensuremath{\boldsymbol{v}}}
\newcommand{\ww}{\ensuremath{\boldsymbol{w}}}
\newcommand{\TT}{\ensuremath{\mathcal{T}}}

\newcommand{\cS}{\ensuremath{\mathcal{S}}}

\newcommand{\PP}{\ensuremath{\mathcal{P}}}
\newcommand{\OO}{\ensuremath{\mathcal{O}}}
\newcommand{\EE}{\ensuremath{\mathcal{E}}}

\newcommand{\normal}{\ensuremath{{\boldsymbol{n}}}}

\renewcommand{\AA}{\ensuremath{\boldsymbol{A}}}

\newcommand{\bbeta}{\ensuremath{\boldsymbol{\beta}}}


\newcommand{\ssigma}{{\boldsymbol\sigma}}

\newcommand{\cchi}{{\boldsymbol\chi}}

\newcommand{\uu}{\boldsymbol{u}}


\newcounter{constantsnumber}
\def\setc#1{
  \ifthenelse{\equal{#1}{poinc}}{C_{\rm edge}}{ 
   \refstepcounter{constantsnumber}
   \label{const#1}C_{\theconstantsnumber}}}

\def\c#1{
  \ifthenelse{\equal{#1}{poinc}}{C_{\rm edge}}{ 
    C_{\ref{const#1}}}}


\begin{document}

\title[]{Analysis of backward Euler primal DPG methods}
\date{\today}
\author{Thomas F\"{u}hrer}
\address{Facultad de Matem\'{a}ticas, Pontificia Universidad Cat\'{o}lica de Chile, Santiago, Chile}
\email{tofuhrer@mat.uc.cl}

\author{Norbert Heuer}
\address{Facultad de Matem\'{a}ticas, Pontificia Universidad Cat\'{o}lica de Chile, Santiago, Chile}
\email{nheuer@mat.uc.cl}

\author{Michael Karkulik}
\address{Departamento de Matem\'{a}tica, Universidad T\'{e}cnica Federico Santa Mar\'{i}a, Valpara\'{i}so, Chile}
\email{michael.karkulik@usm.cl}

\thanks{{\bf Acknowledgment.} 
This work was supported by ANID through FONDECYT projects 1210391 and 1190009 and 1210579.}

\keywords{Primal DPG, time-stepping, heat equation, parabolic problems, a priori analysis}
\subjclass[2010]{65N30, 
                 65N12} 
\begin{abstract}
We analyse backward Euler time stepping schemes for the primal DPG formulation of a class of parabolic problems.
Optimal error estimates are shown in the natural norm and in the $L^2$ norm of the field variable. 
For the heat equation the solution of our primal DPG formulation equals the solution of a standard Galerkin scheme and, thus, optimal error bounds are found in the literature. 
In the presence of advection and reaction terms, however, the latter identity is not valid anymore and the analysis of optimal error bounds requires to resort to elliptic projection operators.
It is essential that these operators be projections with respect to the spatial part of the PDE, as in standard Galerkin schemes, and not with respect to the full PDE at a time step, as done previously.
\end{abstract}
\maketitle

\section{Introduction}
In this work we analyse a backward Euler primal DPG time stepping scheme for the parabolic problem
\begin{subequations}
\begin{alignat}{2}
  \dot{u}-\div\AA \nabla u + \bbeta\cdot\nabla u + \gamma u &=f &\quad&\text{in } (0,T)\times\Omega, \label{eq:model:pde} \\
  u &= 0 &\quad&\text{in } (0,T)\times \partial\Omega, \\
  u(0,\cdot) &= u_0 &\quad&\text{in } \Omega.
\end{alignat}
\end{subequations}
Here, $\Omega\subset\R^d$ ($d\in 2,3$) denotes a bounded Lipschitz domain.
We make the standing assumptions that the coefficients $\AA,\bbeta$, and $\gamma$ are time-independent,
that they are essentially bounded in space, that $\AA$ is a symmetric positive matrix almost everywhere in $\Omega$,
and that $\ip{\bbeta\cdot\nabla v+\gamma v}{v} \geq 0$ for all $v\in H^1_0(\Omega)$. 
The $L^2(\Omega)$ inner product is denoted with $\ip\cdot\cdot$.
We assume that the data satisfy $f(t,\cdot)\in L^2(\Omega)$ for all $t\in[0,T]$ and $u_0\in L^2(\Omega)$.

The discontinuous Petrov--Galerkin method with optimal test functions (DPG) pertains to the class of minimum residual
methods and was introduced in a series of papers~\cite{partI,partII,partIII}.
It has been successfully applied to elliptic problems, see, e.g.,~\cite{DemkowiczG_11_ADM,DemkowiczG_13_PDM} for the Poisson
problem and~\cite{KLove1,DPGbiLaplace} for fourth-order problems.
Through the use of optimal test functions, the discrete problem inherits the stability of the continuous problem.
This comes in advantageous for problems where robustness is one of the main challenges, e.g.,
singularly perturbed problems, see~\cite{HeuerK_2017} for reaction-dominated diffusion problems
and~\cite{DemkowiczHeuer_2013,BroersenStevenson_2014} for the convection-dominated case.
Space-time DPG methods have been studied previously, see, e.g.,~\cite{DGNS_Schroedinger,Wieners19,DPGspaceTime}.
For other space-time minimum residual methods we refer to~\cite{LSQparabolicSpaceTime,Andreev_13,VassilevskiEtAl18,StevensonWesterdiep19}.
Approaches employing the DPG methodology for the time discretization of parabolic and hyperbolic initial value problems have recently
been investigated, cf.~\cite{MunozPD_21,MunozPD_21a}.
On the other hand, time-stepping methods for ODEs are frequently employed in combination with standard Galerkin finite element methods in
space, cf. the monograph~\cite{thomee} for parabolic equations, but less so with DPG methods.
To the best of our knowledge there exist only two works in this direction, dealing with time-stepping and spatial DPG methods for the heat equation,
namely~\cite{DPGtimestepping} and~\cite{RobertsHenneking2021}.

In~\cite{DPGtimestepping}, a backward Euler method is used to discretise in time, and then the DPG methodology is applied to the ultraweak
variational formulation of the resulting equations. The a priori analysis given there employs the Galerkin projection with respect to these very
equations, and hence a spatial discretization error has to be accounted for in every time step. This gives rise to a theoretical error bound of order
$\OO(h/k) + \OO(k)$ for lowest-order discretizations with $h$ and $k$ being the spatial mesh-width and time step, respectively. As numerical experiments from~\cite{RobertsHenneking2021} indicate, this asymptotic error bound is not optimal. 
The authors of~\cite{RobertsHenneking2021} study general $\theta$-schemes (including the backward Euler and the Crank--Nicolson time discretization)
based on the primal DPG method~\cite{DemkowiczG_13_PDM} and the ultraweak DPG method~\cite{DemkowiczG_11_ADM}, and provide an
extensive numerical study and comparison of the different approaches, and it turns out that $\OO(h+k)$ is the optimal error bound which can
be expected for the method from~\cite{DPGtimestepping}.

Our motivation for the present work is to give a theoretically sound explanation of the optimal convergence rates
seen in the numerical experiments from~\cite{RobertsHenneking2021} for the backward Euler primal DPG method.
We will consider general second order linear elliptic spatial differential operators.
For the heat equation ($\AA$ is the identity, $\bbeta=0$, and $\gamma=0$) optimal error estimates follow from the fact that the
field solution component of the primal DPG method is identical to the solution of
a standard Galerkin FEM (cf. Section~\ref{sec:remheateq} for details). 
Therefore, well-known results for time-stepping Galerkin FEMs, see e.g.~\cite{thomee}, apply.
However, for the general case $\bbeta\neq 0$ this is not true anymore and a new analysis needs to be provided.
The accumulation of the spatial discretization error in every time step, identified above as the reason for suboptimality of theoretical results,
is usually avoided in standard FEMs by using the Galerkin projection with respect to the elliptic part of the parabolic equation only.
This idea, nowadays being referred to as \textit{elliptic projection operators}, was introduced in~\cite{Wheeler73} and is by now one of the main
tools in the analysis of time-stepping FEMs, as witnessed again in~\cite{thomee}.
In the present work we prove optimal error estimates in the context of practical primal DPG methods by the use of an elliptic projection operator.
We follow some ideas from~\cite{LSQparabolic} where an elliptic projection in the analysis of time-stepping first-order
system least-squares finite element methods is used in the same spirit.

The remainder of this paper is organised as follows: 
In Section~\ref{sec:main} we introduce the fully discrete method as well as the necessary notation. We prove stability
of the method and provide quasi-optimality results for the elliptic projection operator.
In Section~\ref{sec:strong} we use these results to show optimal error estimates in the $H^1(\Omega)$ norm.
Section~\ref{sec:weak} is devoted to optimal error estimates in weaker norms, particularly in $L^2(\Omega)$.

\section{Time-stepping DPG formulation}\label{sec:main}

\subsection{Notation}
The notation $a\lesssim b$ means that there exists a constant $C>0$ that (possibly) depends on $\Omega$, $\AA$, $\bbeta$, $\gamma$, and $T$, but is independent of involved functions.
We write $a\eqsim b$ if $a\lesssim b$ and $b\lesssim a$. Furthermore, $a\gtrsim b$ stands for $b\lesssim a$.

We consider a time discretization $0 = t_0 < \cdots < t_N=T$, and for notational simplicity
we use a uniform time step size $k = t_n-t_{n-1}$ (but we stress that this is not necessary).
By $\ip{\cdot}{\cdot}$ and $\norm{\cdot}{}$ we denote the inner product and norm in $L^2(\Omega)$.
We consider spatial discretizations based on a shape-regular conforming simplicial mesh $\TT$ of $\Omega$.
To any partition $\TT$ we associate its skeleton $\cS$ consisting of the boundaries of all elements, and the trace space
\begin{align*}
  H^{-1/2}(\cS) := \left\lbrace \widehat\sigma \in \prod_{K\in\TT} H^{-1/2}(\partial K) \middle\vert \exists \ssigma \in \Hdivset{\Omega}:
  \widehat\sigma|_{\partial K} = \ssigma\cdot\nn_K|_{\partial K}, \quad\forall K\in\TT \right\rbrace,
\end{align*}
where $\nn_K$ denotes the unit outward normal vector on $\partial K$. This is a Hilbert space with norm
\begin{align*}
  \norm{\widehat\sigma}{-1/2,k} := \inf\left\lbrace(\norm{\ssigma}{}^{2}+k\norm{\div\ssigma}{}^2)^{1/2}\middle\vert 
  \ssigma\in\Hdivset\Omega, \widehat\sigma|_{\partial K} = \ssigma\cdot\nn_K|_{\partial K}, \quad\forall K\in\TT \right\rbrace.
\end{align*}
The trial space of our method will be
\begin{align*}
  U := H_0^1(\Omega)\times H^{-1/2}(\cS)
\end{align*}
equipped with the norm
\begin{align*}
  \norm{(u,\widehat\sigma)}{U,k}^2 := \norm{\nabla u}{}^2 + \norm{\widehat\sigma}{-1/2,k}^2,
\end{align*}
and the test space will be
\begin{align*}
  V := H^1(\TT) := \prod_{K\in\TT} H^1(K)
\end{align*}
equipped with the norm
\begin{align*}
  \norm{v}{V,k}^2 := \frac1{k}\norm{v}{}^2 + \norm{\AA^{1/2}\pwnabla v}{}^2.
\end{align*}
Here, $\pwnabla v$ is the $\TT$-piecewise gradient.
For functions $\widehat\sigma\in H^{-1/2}(\cS)$ we define
\begin{align*}
  \dual{\widehat\sigma}{v}_{\cS} := \ip{\div\ssigma}{v}+\ip{\ssigma}{\pwnabla v} \quad\forall v\in H^1(\TT),
\end{align*}
where $\ssigma\in \Hdivset\Omega$ with $\ssigma\cdot\normal_K|_{\partial K} = \widehat\sigma|_{\partial K}$ for all $K\in\TT$.
We note that the definition is independent of the choice of $\ssigma$ and recall the following result.
\begin{lemma}\label{lem:traceidentity}
  \begin{align*}
    \norm{\widehat\sigma}{-1/2,k} \eqsim \sup_{0\neq v\in V} \frac{\dual{\widehat\sigma}{v}_{\cS}}{\norm{v}{V,k}}.
  \end{align*}
\end{lemma}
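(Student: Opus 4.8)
The equivalence amounts to computing, up to constants, the norm of the bounded linear functional $v\mapsto\dual{\widehat\sigma}{v}_{\cS}$ on $V$ equipped with the inner product $\langle v,w\rangle_{V,k}:=\tfrac1k\ip{v}{w}+\ip{\AA\pwnabla v}{\pwnabla w}$, which makes $V$ a Hilbert space since its induced norm is $\norm{\cdot}{V,k}$ and this is equivalent to the standard norm on $H^1(\TT)$ by the boundedness and uniform positivity of $\AA$. One inequality, $\sup_{0\neq v\in V}\dual{\widehat\sigma}{v}_{\cS}/\norm{v}{V,k}\lesssim\norm{\widehat\sigma}{-1/2,k}$, is routine: for any representative $\ssigma\in\Hdivset\Omega$ of $\widehat\sigma$ one applies Cauchy--Schwarz to each term of $\dual{\widehat\sigma}{v}_{\cS}=\ip{\div\ssigma}{v}+\ip{\ssigma}{\pwnabla v}$, bounds $\norm{v}{}\le\sqrt k\,\norm{v}{V,k}$ and $\norm{\pwnabla v}{}\lesssim\norm{\AA^{1/2}\pwnabla v}{}\le\norm{v}{V,k}$ to obtain $\dual{\widehat\sigma}{v}_{\cS}\lesssim(\norm{\ssigma}{}^2+k\norm{\div\ssigma}{}^2)^{1/2}\norm{v}{V,k}$, and then takes the infimum over representatives and the supremum over $v$.

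For the converse I would use Riesz representation. With $M:=\sup_{0\neq v\in V}\dual{\widehat\sigma}{v}_{\cS}/\norm{v}{V,k}$, finite by the previous step, there is a unique $v^*\in V$ with $\norm{v^*}{V,k}=M$ and
\[
\tfrac1k\ip{v^*}{v}+\ip{\AA\pwnabla v^*}{\pwnabla v}=\dual{\widehat\sigma}{v}_{\cS}\qquad\text{for all }v\in V.
\]
The point is to show that $\ssigma:=\AA\pwnabla v^*$ is a representative of $\widehat\sigma$ with $\div\ssigma=v^*/k$. Testing the identity with $v$ supported in a single element $K\in\TT$ gives $\div(\AA\nabla v^*|_K)=v^*|_K/k\in L^2(K)$; testing with general $v\in H_0^1(\Omega)$ — on which $\dual{\widehat\sigma}{\cdot}_{\cS}$ vanishes, since such $v$ carries no interelement jumps — and integrating by parts element-wise then forces the normal component of $\AA\pwnabla v^*$ to be continuous across interior faces. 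Hence $\ssigma\in\Hdivset\Omega$ with $\div\ssigma=v^*/k$, and since the functional $v\mapsto\ip{\div\ssigma}{v}+\ip{\ssigma}{\pwnabla v}$ coincides with $\dual{\widehat\sigma}{\cdot}_{\cS}$ by construction, $\ssigma$ indeed represents $\widehat\sigma$. Inserting this $\ssigma$ into the definition of $\norm{\widehat\sigma}{-1/2,k}$ and using $\norm{\AA w}{}^2\le\norm{\AA}{L^\infty}\norm{\AA^{1/2}w}{}^2$ yields
\[
\norm{\widehat\sigma}{-1/2,k}^2\le\norm{\AA\pwnabla v^*}{}^2+\tfrac1k\norm{v^*}{}^2\lesssim\norm{\AA^{1/2}\pwnabla v^*}{}^2+\tfrac1k\norm{v^*}{}^2=\norm{v^*}{V,k}^2=M^2,
\]
i.e.\ the desired bound $\norm{\widehat\sigma}{-1/2,k}\lesssim M$.

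The main obstacle is the middle claim in the second paragraph: extracting from the Riesz representative an honest $\Hdivset\Omega$-field — one whose normal traces match across all interelement faces, not merely a piecewise $H(\div)$-field — and verifying that it carries precisely the normal trace $\widehat\sigma$. Both facts hinge on the single observation that $\dual{\widehat\sigma}{\cdot}_{\cS}$ annihilates $H_0^1(\Omega)$, so that $v^*$ is $\langle\cdot,\cdot\rangle_{V,k}$-orthogonal to $H_0^1(\Omega)$; that orthogonality is exactly the weak form of the interface transmission conditions. Everything else — completeness of $(V,\norm{\cdot}{V,k})$ and the two-sided comparison between $\pwnabla$ and $\AA^{1/2}\pwnabla$ — follows directly from the standing assumptions on $\AA$, and all hidden constants depend only on $\AA$.
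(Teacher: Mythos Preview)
Your proof is correct and is precisely the Riesz-representation argument that underlies the ``well-established arguments'' the paper cites from the broken-spaces literature; the paper itself gives no details beyond the reference. Your construction also explains the paper's remark that replacing $\norm{\ssigma}{}$ by $\norm{\AA^{-1/2}\ssigma}{}$ in the trace norm yields an \emph{equality}: with that weighting the representative $\ssigma=\AA\pwnabla v^*$ satisfies $\norm{\AA^{-1/2}\ssigma}{}^2+k\norm{\div\ssigma}{}^2=\norm{v^*}{V,k}^2$ exactly, and the only inequality constants in your argument come from comparing $\norm{\AA\,\cdot\,}{}$ with $\norm{\AA^{1/2}\,\cdot\,}{}$.
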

\begin{proof}
  The proof follows by now well-established arguments, see~\cite{breakSpace} or~\cite{DPGlargeDomain} for problems where the norms depend on parameters.
  Note that if we use $\norm{A^{-1/2}\ssigma}{}$ instead of $\norm{\ssigma}{}^2$ in the definition of $\norm{\cdot}{-1/2,k}$ then equality holds.
\end{proof}

\subsection{Primal DPG formulation}\label{sec:dpg}

For a simpler notation, we use superindices to indicate time evaluations, i.e., for a time-dependent function $v =
v(t;\cdot)$ we use the notation $v^n(\cdot) := v(t_n;\cdot)$.
Moreover, we use similar notations for discrete quantities which however are not defined for all $t\in [0,T]$. 
For instance, later on $u_h^n$ will denote an approximation of $u^n = u(t_n;\cdot)$ where $u$ is the exact solution of
the parabolic equation.
We approximate $\dot{u}^n$ by a backward difference, i.e., $\dot{u}^n \approx (u^n-u^{n-1})/k$,
this formally yields the elliptic PDE
\begin{align*}
  \frac{1}k u^n - \div\AA\nabla u^n + \bbeta\cdot\nabla u^n + \gamma u^n &= f^n + \frac1k u^{n-1},
\end{align*}
which admits a unique solution $u^n\in H_0^1(\Omega)$.
By testing with $v\in V$ and introducing $\widehat\sigma^n$ as
\begin{align*}
  \widehat\sigma^n|_{\partial K} := \AA\nabla u^n\cdot\nn_K|_{\partial K}, \quad\forall K\in\TT,
\end{align*}
and integrating by parts one obtains the primal DPG formulation
\begin{align*}
  \frac1k\ip{u^n}v + \ip{\AA\nabla u^n}{\pwnabla v} + \ip{\bbeta\cdot\nabla u^n}{v} + \ip{\gamma u^n}{v} - \dual{\widehat \sigma^n}{v}_{\cS}
  = \ip{f^n}{v} + \frac1k\ip{u^{n-1}}v.
\end{align*}

We introduce some bilinear forms and the right-hand side functional:
For $\uu=(u,\widehat\sigma)\in U$, $v\in V$ and $w,g\in L^2{(\Omega)}$ set
\begin{align*}
  b(\uu,v) &:= \ip{\AA\nabla u}{\pwnabla v} + \ip{\bbeta\cdot\nabla u}{v} + \ip{\gamma u}{v} - \dual{\widehat \sigma}{v}_{\cS}, \\
  a(\uu,v) &:= \frac1k\ip{u}v + b(\uu,v), \\
  F(g,w;v) &:= \ip{g}{v} + \frac1k\ip{w}{v}.
\end{align*}

Therefore, our formulation simply reads
\begin{align*}
  a(\uu^n,v) = F(f^n,u^{n-1};v) \quad\forall v\in V. 
\end{align*}

We note that the bilinear form $a(\cdot,\cdot)$ satisfies a continuous $\inf$--$\sup$ condition
using the norms $\norm{\cdot}{U,k}$ and $\norm{\cdot}{V,k}$. This can be seen by extending the analysis of
standard works like~\cite{breakSpace} to general elliptic PDEs. We do not pursue this further,
as we ultimately want to analyse the practical DPG method. Furthermore, we note that $a$ is not bounded independently of $k$
using the above norms, but rather
\begin{align}\label{eq:a:bdd}
  |a(\uu,v)| \lesssim \left( k^{-1/2}\norm{u}{} + \norm{\uu}{U,k} \right) \norm{v}{V,k},
\end{align}
where $\uu=(u,\widehat\sigma)$.

\subsubsection{Fully discrete scheme}
By $\PP^p(K)$ we denote the space of polynomials of degree $p\in\N_0$, and 
\begin{align*}
  \PP^p(\TT) := \set{v\in L^2(\Omega)}{v|_K\in \PP^p(K)\,\,\forall K\in\TT}
\end{align*}
For the discretization of traces we use the space of facewise polynomials denoted by $\PP^p(\cS)$. In particular, we note that $\PP^p(\cS)$ is the (normal-) trace space of the $p$-th order Raviart--Thomas space.
We consider the spaces
\begin{align*}
  U_h &:= \PP^{p+1}(\TT)\cap H_0^1(\Omega) \times \PP^p(\cS), \\
  V_h &:= \PP^{p+d}(\TT).
\end{align*}
The (discrete) trial-to-test operator is given by 
\begin{align*}
  \ip{\Theta_h\uu_h}{v_h}_{V,k} = a(\uu_h,v_h) \quad\forall v_h\in V_h.
\end{align*}
We recall that the inner product is given by
\begin{align*}
  \ip{v}{\delta v}_{V,k} = \frac1k\ip{v}{\delta v} + \ip{\AA\pwnabla v}{\pwnabla\delta v}.
\end{align*}

The fully discrete scheme then reads: Given $u_h^0\in L^2(\Omega)$, solve
\begin{align}\label{eq:euler}
  a(\uu_h^n,\Theta_h \ww_h) = F(f^n,u_h^{n-1};\Theta_h\ww_h) \quad\forall \ww_h\in U_h, \, n=1,2,\dots.
\end{align}

The next lemma establishes coercivity of $a(\cdot,\Theta_h\cdot)$ on $U_h$.

\begin{lemma}\label{lem:euler}
  Let $\uu_h = (u_h,\widehat\sigma_h)\in U_h$. Then, 
  \begin{align*}
    \frac1k\norm{u_h}{}^2 + \norm{\AA^{1/2}\nabla u_h}{}^2 \leq \norm{\Theta_h \uu_h}{V,k}^2.
  \end{align*}
  If additionally, $hk^{-1/2}\leq C_1$ for some constant $C_1>0$, then,
  \begin{align*}
    C_2\norm{\widehat\sigma_h}{-1/2,k} \leq C_3\norm{\Theta_h(0,\widehat\sigma_h)}{V,k} \leq \norm{\Theta_h\uu_h}{V,k}.
  \end{align*}
  Here, $C_2,C_3>0$ denote generic constants independent of $h,k$ and $\uu_h$. 
\end{lemma}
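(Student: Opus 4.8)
The plan is to prove the two displayed estimates separately; the first is a simple energy argument, and the second is treated by splitting $\uu_h=(u_h,0)+(0,\widehat\sigma_h)$ and handling the flux part with a Fortin operator.

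For the first estimate I would test the defining relation $\ip{\Theta_h\uu_h}{v_h}_{V,k}=a(\uu_h,v_h)$ with $v_h=u_h$, which is admissible since $\PP^{p+1}(\TT)\subset\PP^{p+d}(\TT)=V_h$ because $d\ge2$. As $u_h\in H^1_0(\Omega)$ we have $\pwnabla u_h=\nabla u_h$ and $\dual{\widehat\sigma_h}{u_h}_{\cS}=0$ (by the definition of $\dual{\cdot}{\cdot}_{\cS}$ and integration by parts over $\Omega$), while $\ip{\bbeta\cdot\nabla u_h+\gamma u_h}{u_h}\ge0$ by the standing assumption; hence
\begin{align*}
  \norm{\Theta_h\uu_h}{V,k}\,\norm{u_h}{V,k}\ge\ip{\Theta_h\uu_h}{u_h}_{V,k}=a(\uu_h,u_h)\ge\tfrac1k\norm{u_h}{}^2+\norm{\AA^{1/2}\nabla u_h}{}^2=\norm{u_h}{V,k}^2 .
\end{align*}
Dividing by $\norm{u_h}{V,k}$ (the bound is trivial when $u_h=0$) gives $\norm{u_h}{V,k}\le\norm{\Theta_h\uu_h}{V,k}$, which is the first claim.

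For the second estimate, consider first the right inequality. By linearity $\Theta_h(0,\widehat\sigma_h)=\Theta_h\uu_h-\Theta_h(u_h,0)$, so it suffices to bound $\norm{\Theta_h(u_h,0)}{V,k}$; since $\Theta_h(u_h,0)\in V_h$ this norm equals $\sup_{0\neq v_h\in V_h}a((u_h,0),v_h)/\norm{v_h}{V,k}$, and the boundedness estimate~\eqref{eq:a:bdd} (the flux component vanishes) together with uniform positive definiteness of $\AA$ yields $\norm{\Theta_h(u_h,0)}{V,k}\lesssim k^{-1/2}\norm{u_h}{}+\norm{\nabla u_h}{}\lesssim k^{-1/2}\norm{u_h}{}+\norm{\AA^{1/2}\nabla u_h}{}\lesssim\norm{\Theta_h\uu_h}{V,k}$, the last step by the first estimate. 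The triangle inequality then gives the right inequality for a suitable constant $C_3$.

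The left inequality is the essential point and the only place where $hk^{-1/2}\le C_1$ enters. Using $a((0,\widehat\sigma_h),v_h)=-\dual{\widehat\sigma_h}{v_h}_{\cS}$ and $\Theta_h(0,\widehat\sigma_h)\in V_h$ one gets $\norm{\Theta_h(0,\widehat\sigma_h)}{V,k}=\sup_{0\neq v_h\in V_h}\dual{\widehat\sigma_h}{v_h}_{\cS}/\norm{v_h}{V,k}$, so by Lemma~\ref{lem:traceidentity} it remains to show that the supremum of the same quotient over all of $V$ is, up to a constant, already attained over $V_h$. For this I would invoke the Fortin operator $\Pi_h\colon H^1(\TT)\to\PP^{p+d}(\TT)=V_h$ of the primal DPG method (cf.~\cite{breakSpace}), which preserves the $\PP^p$ face moments of its argument — so that $\dual{\widehat q_h}{v-\Pi_h v}_{\cS}=0$ for all $\widehat q_h\in\PP^p(\cS)$ — and has the usual local stability and $\OO(h)$-approximation properties; from these,
\begin{align*}
  \norm{\Pi_h v}{V,k}^2\lesssim\tfrac1k\norm{v}{}^2+\bigl(1+h^2k^{-1}\bigr)\norm{\pwnabla v}{}^2\lesssim\norm{v}{V,k}^2 ,
\end{align*}
the last step using $h^2k^{-1}=(hk^{-1/2})^2\le C_1^2$ and the equivalence $\norm{\pwnabla v}{}\eqsim\norm{\AA^{1/2}\pwnabla v}{}$. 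The standard Fortin argument then gives, for $\widehat\sigma_h\in\PP^p(\cS)$,
\begin{align*}
  \norm{\widehat\sigma_h}{-1/2,k}\lesssim\sup_{0\neq v\in V}\frac{\dual{\widehat\sigma_h}{v}_{\cS}}{\norm{v}{V,k}}=\sup_{0\neq v\in V}\frac{\dual{\widehat\sigma_h}{\Pi_h v}_{\cS}}{\norm{v}{V,k}}\lesssim\sup_{0\neq v_h\in V_h}\frac{\dual{\widehat\sigma_h}{v_h}_{\cS}}{\norm{v_h}{V,k}}=\norm{\Theta_h(0,\widehat\sigma_h)}{V,k} ,
\end{align*}
which is the left inequality after choosing $C_2$ accordingly. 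The only genuine obstacle is this $k$-robustness of $\Pi_h$: the classical construction is tailored to an $H^1$-type test norm, while the $L^2$-part of $\norm{\cdot}{V,k}$ carries the weight $k^{-1}$, so the $\OO(h)$ error $\Pi_h$ commits in $L^2$ is harmless only under the coupling $h\lesssim k^{1/2}$; everything else reduces to Cauchy--Schwarz in $\ip{\cdot}{\cdot}_{V,k}$, the standing positivity hypotheses, and the identities $\ip{\Theta_h\ww_h}{\cdot}_{V,k}=a(\ww_h,\cdot)$ on $V_h$ and $a((0,\widehat\sigma),v)=-\dual{\widehat\sigma}{v}_{\cS}$.
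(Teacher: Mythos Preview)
Your proposal is correct and follows essentially the same route as the paper: the first estimate is obtained by testing with $u_h\in V_h$ and using $\dual{\widehat\sigma_h}{u_h}_{\cS}=0$ together with the sign condition on the lower-order terms; the second is obtained via the Fortin operator for the left inequality (with the $hk^{-1/2}\le C_1$ condition ensuring $k$-robust boundedness of $\Pi_h$ in $\norm{\cdot}{V,k}$) and via the triangle inequality plus a bound on $\norm{\Theta_h(u_h,0)}{V,k}$ for the right inequality. The only cosmetic differences are that the paper cites the Fortin operator from~\cite{practicalDPG} rather than~\cite{breakSpace}, and bounds $\norm{\Theta_h(u_h,0)}{V,k}$ by direct computation of $a((u_h,0),\Theta_h(u_h,0))$ rather than by invoking~\eqref{eq:a:bdd}.
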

\begin{proof}
  For the first part we use that $\PP^{p+1}(\TT)\cap H_0^1(\Omega)\subset V_h$ and $\ip{\bbeta\cdot\nabla w+\gamma w}w\geq 0$ for $w\in H_0^1(\Omega)$: 
  Let $\uu_h=(u_h,\widehat\sigma_h)\in U_h$ be given. With the latter observation, the definition of the optimal test function and the fact that $\dual{\widehat\sigma_h}{w}_{\cS} = 0$ for $w\in H_0^1(\Omega)$ we get that
  \begin{align*}
    \frac{1}k\norm{u_h}{}^2 + \norm{\AA^{1/2}\nabla u_h}{}^2 &\leq 
    \frac{1}k\ip{u_h}{u_h} + \ip{\AA\nabla u_h}{\nabla u_h} + \ip{\bbeta\cdot\nabla u_h}{u_h} + \ip{\gamma u_h}{u_h}
    \\
    &= a( (u_h,0),u_h)= \ip{\Theta_h\uu_h}{u_h}_{V,k} \leq \norm{\Theta_h\uu_h}{V,k}\norm{u_h}{V,k} 
    \\&= \norm{\Theta_h\uu_h}{V,k}\left(\frac1k\norm{u_h}{}^2 + \norm{\AA^{1/2}\nabla u_h}{}^2\right)^{1/2}.
  \end{align*}
  This finishes the proof of the first estimate.

  For the second one we use some results established in the literature on fully discrete DPG formulations (practical
  DPG), e.g.~\cite{practicalDPG}.
  Let $\Pi_h\colon V\to V_h$ be the Fortin operator defined in~\cite[Lemma~3.2]{practicalDPG} which
  in~\cite{practicalDPG} is denoted by $\Pi_r^\mathrm{grad}$ with $r=p+d$.
  By Lemma~\ref{lem:traceidentity} and the Fortin property we have that
  \begin{align}\label{eq:euler:proof:a}
    \norm{\widehat\sigma_h}{-1/2,k} \eqsim \sup_{0\neq v\in H^1(\TT)} \frac{\dual{\widehat\sigma_h}v_{\cS}}{\norm{v}{V,k}} = 
    \sup_{0\neq v\in H^1(\TT)} \frac{\dual{\widehat\sigma_h}{\Pi_h v}_{\cS}}{\norm{v}{V,k}}.
  \end{align}
  Moreover, from~\cite[proof of Lemma~3.2]{practicalDPG} we infer that
  \begin{align*}
    k^{-1/2}\norm{\Pi_h v}{} + \norm{\nabla \Pi_h v}{} \lesssim k^{-1/2}\norm{v}{} + (1+h k^{-1/2})\norm{\nabla v}{}.
  \end{align*}
  Thus, using the assumption $hk^{-1/2}\lesssim 1$ we see that
  \begin{align*}
    \norm{\Pi_h v}{V,k} \lesssim \norm{v}{V,k} \quad\forall v\in V.
  \end{align*}
  Combining this with~\eqref{eq:euler:proof:a} we conclude that
  \begin{align*}
    \norm{\widehat\sigma_h}{-1/2,k} &\lesssim \sup_{0\neq v\in H^1(\TT)} \frac{\dual{\widehat\sigma_h}{\Pi_h v}_{\cS}}{\norm{v}{V,k}} \lesssim 
    \sup_{0\neq v\in H^1(\TT)} \frac{\dual{\widehat\sigma_h}{\Pi_h v}_{\cS}}{\norm{\Pi_h v}{V,k}}
    \\
    &\leq \sup_{0\neq v\in V_h} \frac{\dual{\widehat\sigma_h}{v_h}_{\cS}}{\norm{v_h}{V,k}} = \norm{\Theta_h(0,\widehat\sigma_h)}{V,k}.
  \end{align*}
  Then, $\norm{\Theta_h(0,\widehat\sigma_h)}{V,k}\leq \norm{\Theta_h(u_h,\widehat\sigma_h)}{V,k}+\norm{\Theta_h(u_h,0)}{V,k}$. It remains to estimate $\norm{\Theta_h(u_h,0)}{V,k}$: We use the notation $\Theta_h(u_h,0)=:v_h$, 
  \begin{align*}
    \norm{\Theta_h(u_h,0)}{V,k}^2 &= a((u_h,0),\Theta_h(u_h,0)) = \frac1k\ip{u_h}{v_h} + \ip{\AA\nabla u_h}{\pwnabla v_h} + \ip{\bbeta\cdot\nabla u_h+\gamma u_h}{v_h} 
    \\ &\lesssim (\frac1k\norm{u_h}{}^2+\norm{\AA^{1/2}\nabla u_h}{}^2)^{1/2}\norm{v_h}{V,k}
    \leq \norm{\Theta_h\uu_h}{V,k} \norm{\Theta_h(u_h,0)}{V,k}. 
  \end{align*}
  Note that the involved constant only depends on the end time $T$ and the coefficients, but is otherwise independent of $k$ or $h$. 
\end{proof}

It should be noted that the condition $hk^{-1/2}\lesssim 1$ is only needed to get an $k$ independent estimate for the
trace norm. If this condition is not satisfied then the constant depends on $k$.

\begin{theorem}
  Problem~\eqref{eq:euler} is well posed. In particular, the solutions are stable in the sense that
  \begin{align*}
    \norm{u_h^n}{} \leq \left(\norm{u_h^n}{}^2 + k\norm{\AA^{1/2}\nabla u_h^n}{}^2\right)^{1/2}\leq \sum_{j=1}^n k \norm{f^j}{} + \norm{u_h^0}{}.
  \end{align*}
\end{theorem}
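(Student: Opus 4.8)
The plan is to treat the two assertions separately. For well-posedness, observe that \eqref{eq:euler} is a square linear system for $\uu_h^n\in U_h$, so it suffices to prove uniqueness. The left-hand bilinear form $\ww_h\mapsto a(\uu_h,\Theta_h\ww_h)$ satisfies $a(\uu_h,\Theta_h\uu_h)=\ip{\Theta_h\uu_h}{\Theta_h\uu_h}_{V,k}=\norm{\Theta_h\uu_h}{V,k}^2$, obtained by testing the defining relation of $\Theta_h$ with $v_h=\Theta_h\uu_h\in V_h$. Hence the system is uniquely solvable as soon as $\Theta_h$ is injective on $U_h$. If $\Theta_h\uu_h=0$ with $\uu_h=(u_h,\widehat\sigma_h)$, the first estimate of Lemma~\ref{lem:euler} forces $\tfrac1k\norm{u_h}{}^2+\norm{\AA^{1/2}\nabla u_h}{}^2=0$, so $u_h=0$ since $\AA$ is positive and $u_h\in H_0^1(\Omega)$; and then $\dual{\widehat\sigma_h}{v_h}_{\cS}=0$ for all $v_h\in V_h$, whence $\widehat\sigma_h=0$ by the discrete trace stability used in the proof of Lemma~\ref{lem:euler} (equivalently, by testing with elementwise bubble functions in $V_h=\PP^{p+d}(\TT)$; the mesh condition $hk^{-1/2}\lesssim1$ only affects the $k$-dependence of constants and is immaterial for injectivity).

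For the stability estimate I would run the classical backward Euler energy argument, adapted to the DPG setting through the test-function choice. Take $\ww_h=\uu_h^n$ in \eqref{eq:euler} and set $v_h^n:=\Theta_h\uu_h^n\in V_h$. Using $a(\uu_h^n,v_h^n)=\norm{v_h^n}{V,k}^2$ as above, \eqref{eq:euler} becomes $\norm{v_h^n}{V,k}^2=\ip{f^n}{v_h^n}+\tfrac1k\ip{u_h^{n-1}}{v_h^n}$. Cauchy--Schwarz together with $\norm{v_h^n}{}\le k^{1/2}\norm{v_h^n}{V,k}$ (immediate from the definition of $\norm{\cdot}{V,k}$) gives $\norm{v_h^n}{V,k}\le k^{1/2}\norm{f^n}{}+k^{-1/2}\norm{u_h^{n-1}}{}$. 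Invoking the first estimate of Lemma~\ref{lem:euler} once more, namely $\tfrac1k\norm{u_h^n}{}^2+\norm{\AA^{1/2}\nabla u_h^n}{}^2\le\norm{v_h^n}{V,k}^2$, then multiplying by $k$ and taking square roots, yields the one-step bound
\[
  \Big(\norm{u_h^n}{}^2+k\norm{\AA^{1/2}\nabla u_h^n}{}^2\Big)^{1/2}\le k\norm{f^n}{}+\norm{u_h^{n-1}}{}.
\]

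Finally, since $\AA$ is positive the left-most inequality of the theorem is trivial, and in particular $\norm{u_h^n}{}\le k\norm{f^n}{}+\norm{u_h^{n-1}}{}$; a straightforward induction on $n$ gives $\norm{u_h^n}{}\le\sum_{j=1}^n k\norm{f^j}{}+\norm{u_h^0}{}$, and inserting the resulting bound for $\norm{u_h^{n-1}}{}$ into the one-step estimate produces the asserted bound on $\big(\norm{u_h^n}{}^2+k\norm{\AA^{1/2}\nabla u_h^n}{}^2\big)^{1/2}$. I do not expect a genuine obstacle; the only mild subtlety is the injectivity of the trial-to-test operator on $U_h$ for well-posedness, where the discrete trace stability from Lemma~\ref{lem:euler} enters, whereas the stability estimate rests solely on testing with $\uu_h^n$, on the identity $a(\uu_h,\Theta_h\uu_h)=\norm{\Theta_h\uu_h}{V,k}^2$, and on the coercivity of Lemma~\ref{lem:euler}.
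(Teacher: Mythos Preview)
Your proposal is correct and follows essentially the same route as the paper: test \eqref{eq:euler} with $\Theta_h\uu_h^n$, use the identity $a(\uu_h^n,\Theta_h\uu_h^n)=\norm{\Theta_h\uu_h^n}{V,k}^2$ together with the coercivity of Lemma~\ref{lem:euler}, bound the right-hand side by Cauchy--Schwarz and $\norm{\Theta_h\uu_h^n}{}\le k^{1/2}\norm{\Theta_h\uu_h^n}{V,k}$, and iterate the resulting one-step estimate. Your well-posedness argument is slightly more explicit about injectivity of $\Theta_h$ (and you are right that the mesh condition $hk^{-1/2}\lesssim1$ only governs the $k$-independence of constants, not injectivity itself), whereas the paper simply invokes Lemma~\ref{lem:euler}.
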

\begin{proof}
  According to Lemma~\ref{lem:euler},
  Problem~\eqref{eq:euler} admits unique solutions $\uu_h^n\in U_h$, $n=1,2,\dots$.

  Then,
  \begin{align*}
    \norm{\Theta_h\uu_h^n}{V,k}^2 &= a(\uu_h^n,\Theta_h\uu_h^n) = F(f^n,u_h^{n-1};\Theta_h\uu_h^n)
    = \ip{f^n+k^{-1}u_h^{n-1}}{\Theta_h\uu_h^n} 
    \\
    &\leq k^{1/2}\norm{f^n+k^{-1}u_h^{n-1}}{}k^{-1/2}\norm{\Theta_h\uu_h^n}{} 
    \leq k^{1/2}\norm{f^n+k^{-1}u_h^{n-1}}{}\norm{\Theta_h\uu_h^n}{V,k}.
  \end{align*}
  This together with the estimate from Lemma~\ref{lem:euler} shows that
  \begin{align*}
   \left(\norm{u_h^n}{}^2 + k\norm{\AA^{1/2}\nabla u_h^n}{}^2\right)^{1/2}\leq k^{1/2}\norm{\Theta_h\uu_h^n}{V,k} \leq k \norm{f^n}{} + \norm{u_h^{n-1}}{}.
  \end{align*}
  Iterating the arguments concludes the proof.
\end{proof}

\subsubsection{Remark: Heat equation --- the trivial case}\label{sec:remheateq}
Let us consider the simplest model which is the heat equation where $\AA$ is the identity, $\bbeta=0$, $\gamma=0$. 

It is straightforward to see that for $(u_h,0)\in U_h$, $\Theta_h(u_h,0) = u_h$ since
\begin{align*}
  \ip{u_h}{v_h}_{V,k} = \frac1{k} \ip{u_h}{v_h} + \ip{\nabla u_h}{\pwnabla v_h} = a(\,(u_h,0),v_h) = \ip{\Theta_h(u_h,0)}{v_h}_{V,k}.
\end{align*}
Using the test functions $\Theta_h(v_h,0)$ in~\eqref{eq:euler} and the fact that $\dual{\widehat\sigma_h}{v_h}_{\cS}=0$ we see that
\begin{align*}
  a(\uu_h^n,\Theta_h(v_h,0)) = \frac1{k}\ip{u_h^n}{v_h} + \ip{\nabla u_h^n}{\nabla v_h}  = \ip{f^n+k^{-1}u_h^{n-1}}{v_h} = F(f^n,u_h^{n-1};\Theta_h(v_h,0))
\end{align*}
for all $v_h\in \PP^{p+1}(\TT)\cap H_0^1(\Omega)$.
Let $\uu_h^n=(u_h^n,\widehat\sigma_h^n)\in U_h$ denote the solution to~\eqref{eq:euler}, then the solution component $u_h$ satisfies
\begin{align*}
  \frac1{k}\ip{u_h^n}{v_h} + \ip{\nabla u_h^n}{\nabla v_h} = \ip{f^n}{v_h} + \frac1{k} \ip{u_h^{n-1}}{v_h} \quad\forall v_h\in \PP^{p+1}(\TT)\cap H_0^1(\Omega),
\end{align*}
which is the standard Galerkin FEM, see~\cite[Section~1]{thomee}. Thus the primal DPG solution component $u_h$ is identical to the standard Galerkin FEM solution. 
In particular, optimal error estimates in $L^\infty(L^2)$ and $L^\infty(H_0^1)$, see~\cite{thomee}, are valid for the primal DPG solution.
In~\cite{RobertsHenneking2021} optimal $L^\infty(H_0^1)$ have been observed in numerical experiments, which can be explained with the observation above.

Note that this remark is true only if we consider $\bbeta=0$, $\gamma=0$.

\subsection{Elliptic projection-type operator}
To obtain optimal error estimates we introduce an elliptic projection.
The idea goes back to~\cite{Wheeler73} to obtain optimal $L^2(\Omega)$ a priori error estimates and is extensively used for Galerkin methods,
but has not been studied for least-squares methods until only very recently in~\cite{LSQparabolic}. 
For DPG methods an additional difficulty arises since test norms are mesh dependent.
A main difference to Galerkin methods is that, although the elliptic part of the parabolic PDE might be symmetric,
our elliptic projection operator always corresponds to a non-symmetric problem. 
The reason is that we need to use optimal test functions used in~\eqref{eq:euler} in combination with the bilinear form
$b(\cdot,\cdot)$, which models the elliptic part.
\bigskip

We define the elliptic projection operator $\EE_h:U\rightarrow U_h$ by
\begin{align}\label{eq:ellipticProjection}
  b(\EE_h\uu,\Theta_h\ww_h) = b(\uu,\Theta_h\ww_h) \quad\forall \ww_h\in U_h. 
\end{align}

In the proofs below and in some results we will use the (semi-)norm 
  \begin{align*}
    \enorm{\uu}_k^2 := \norm{\AA^{1/2}\nabla u}{}^2 + \norm{\Theta_h(0,\widehat\sigma)}{V,k}^2.
  \end{align*}
Note that from the results of Lemma~\ref{lem:euler} we infer that $\enorm{\uu_h}_k \eqsim \norm{\uu_h}{U,k}$ for $\uu_h\in U_h$
and the equivalence constants are independent of $h,k$ if $hk^{-1/2}=\OO(1)$.
The next lemma establishes boundedness and an inf-sup condition of $b(\cdot,\Theta_h\cdot)$.
Note that this is not as trivial as it seems since $\Theta_h$ is calculated using the bilinear form $a(\cdot,\cdot)$ and the inner product in $V$.
Recall that $a(\cdot,\cdot)$ is not bounded independently of $k$ and that $\ip\cdot\cdot_{V,k}$ includes terms weighted with negative powers
of the time step size $k$. 
\begin{lemma}\label{lem:b}
  The bilinear form $b(\cdot,\Theta_h\cdot)$ is bounded,
  \begin{align*}
    |b(\uu,\Theta_h\ww_h)| &\lesssim \norm{\uu}{U,k}\norm{\ww_h}{U,k},\\
    |b(\uu,\Theta_h\ww_h)| &\lesssim \norm{\uu}{U,k}\enorm{\ww_h}_k,
  \end{align*}
for all $\uu\in U$, $\ww_h\in U_h$, and fulfills the inf-sup condition
\begin{align*}
  \enorm{\uu_h}_k \lesssim \sup_{0\neq \vv_h\in U_h} \frac{b(\uu_h,\Theta_h\vv_h)}{\enorm{\vv_h}_k}
\end{align*}
for all $\uu_h\in U_h$.
In particular, we conclude that Problem~\ref{eq:ellipticProjection} admits a unique solution. 
\end{lemma}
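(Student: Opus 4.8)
The plan is to establish the three claims separately, using the structural relations between $a$, $b$, $\Theta_h$, and the norms $\enorm{\cdot}_k$, $\norm{\cdot}{U,k}$, $\norm{\cdot}{V,k}$ that were set up above. For the first boundedness bound I would write $b(\uu,\Theta_h\ww_h)$ directly from its definition and estimate term by term: $\ip{\AA\nabla u}{\pwnabla(\Theta_h\ww_h)}$ is controlled by $\norm{\AA^{1/2}\nabla u}{}\,\norm{\AA^{1/2}\pwnabla(\Theta_h\ww_h)}{}\le \norm{u}{U,k}\norm{\Theta_h\ww_h}{V,k}$; the reaction/advection terms $\ip{\bbeta\cdot\nabla u+\gamma u}{\Theta_h\ww_h}$ are bounded by $\norm{\nabla u}{}\,\norm{\Theta_h\ww_h}{}\lesssim k^{1/2}\norm{u}{U,k}\norm{\Theta_h\ww_h}{V,k}$ (absorbing the $k^{1/2}$ since $k\le T$); and the trace term $\dual{\widehat\sigma}{\Theta_h\ww_h}_{\cS}$ is handled by Lemma~\ref{lem:traceidentity}, giving $\lesssim\norm{\widehat\sigma}{-1/2,k}\norm{\Theta_h\ww_h}{V,k}$. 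So altogether $|b(\uu,\Theta_h\ww_h)|\lesssim \norm{\uu}{U,k}\norm{\Theta_h\ww_h}{V,k}$, and it remains to bound $\norm{\Theta_h\ww_h}{V,k}$ by $\norm{\ww_h}{U,k}$; this follows because $\norm{\Theta_h\ww_h}{V,k}^2=a(\ww_h,\Theta_h\ww_h)\lesssim(k^{-1/2}\norm{w}{}+\norm{\ww_h}{U,k})\norm{\Theta_h\ww_h}{V,k}$ by \eqref{eq:a:bdd}, and for $\ww_h\in U_h$ the inverse-type estimate $k^{-1/2}\norm{w}{}$ is already contained in $\norm{\ww_h}{U,k}$ via the coercivity $k^{-1}\norm{w}{}^2+\norm{\AA^{1/2}\nabla w}{}^2\le\norm{\Theta_h\ww_h}{V,k}^2$ from Lemma~\ref{lem:euler} — actually it is cleaner to observe directly that the first Lemma~\ref{lem:euler} estimate gives $\norm{\Theta_h\ww_h}{V,k}^2\ge k^{-1}\norm{w}{}^2$, hence $k^{-1/2}\norm{w}{}\le\norm{\Theta_h\ww_h}{V,k}$, which we can absorb. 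The cleanest route for the $k^{-1/2}\norm{w}{}$ term is: use $a(\ww_h,\Theta_h\ww_h)=b(\ww_h,\Theta_h\ww_h)+k^{-1}\ip{w}{\Theta_h\ww_h}$, bound the last term by $k^{-1/2}\norm{w}{}\cdot k^{-1/2}\norm{\Theta_h\ww_h}{}\le k^{-1/2}\norm{w}{}\,\norm{\Theta_h\ww_h}{V,k}$, bound $b(\ww_h,\Theta_h\ww_h)$ by the just-proved estimate, and then use Lemma~\ref{lem:euler} to replace $k^{-1/2}\norm{w}{}$ by $\norm{\Theta_h\ww_h}{V,k}$ and divide. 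This yields both the first bound (with $\norm{\ww_h}{U,k}$) and, using $\enorm{\ww_h}_k\eqsim\norm{\ww_h}{U,k}$ on $U_h$, the second one.

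For the second boundedness estimate, the point is to keep the sharper dependence $\enorm{\ww_h}_k$ on the right rather than $\norm{\ww_h}{U,k}$; but since these are equivalent on $U_h$ (with constants independent of $h,k$ when $hk^{-1/2}=\OO(1)$, as noted after the definition of $\enorm{\cdot}_k$), this follows immediately from the first bound. Alternatively, one can go through $\norm{\Theta_h\ww_h}{V,k}\le\enorm{\ww_h}_k+\,(\text{lower order})$ more carefully, splitting $\ww_h=(w,0)+(0,\widehat\sigma)$, using $\norm{\Theta_h(0,\widehat\sigma)}{V,k}$ is literally a summand of $\enorm{\ww_h}_k$, and $\norm{\Theta_h(w,0)}{V,k}\lesssim(k^{-1}\norm{w}{}^2+\norm{\AA^{1/2}\nabla w}{}^2)^{1/2}$ from the last computation in the proof of Lemma~\ref{lem:euler}; combined with the Lemma~\ref{lem:euler} coercivity applied to the already-known solution this still gives a $k$-independent bound, but the equivalence-of-norms shortcut is preferable.

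For the inf-sup condition, fix $\uu_h\in U_h$ and test with $\vv_h=\uu_h$ itself. Then $b(\uu_h,\Theta_h\uu_h)=a(\uu_h,\Theta_h\uu_h)-k^{-1}\ip{u_h}{\Theta_h\uu_h}=\norm{\Theta_h\uu_h}{V,k}^2-k^{-1}\ip{u_h}{\Theta_h\uu_h}$. The subtracted term is $\ge -k^{-1/2}\norm{u_h}{}\cdot k^{-1/2}\norm{\Theta_h\uu_h}{}\ge -k^{-1/2}\norm{u_h}{}\,\norm{\Theta_h\uu_h}{V,k}$, which is not obviously absorbable — this sign issue is the main obstacle. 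The fix is to instead test with $\vv_h=(0,\widehat\sigma_h)$ to extract control of the trace component, and separately exploit that $b((u_h,0),\cdot)$ sees only the $H^1$ part. Concretely: first, by Lemma~\ref{lem:euler}, $\norm{\Theta_h(0,\widehat\sigma_h)}{V,k}$ (one of the two summands of $\enorm{\uu_h}_k^2$) is equivalent to $\norm{\widehat\sigma_h}{-1/2,k}$; second, test the elliptic form with $\vv_h=(u_h,0)$ and note $\dual{\widehat\sigma_h}{\Theta_h(u_h,0)}_{\cS}$ need not vanish, so this does not cleanly isolate $\norm{\AA^{1/2}\nabla u_h}{}^2$ either. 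The clean approach, following the pattern already used twice in the proof of Lemma~\ref{lem:euler}, is: (i) choose $\vv_h=(0,\widehat\sigma_h)$, expand $b((u_h,\widehat\sigma_h),\Theta_h(0,\widehat\sigma_h))$, and use the duality/Fortin machinery from that proof plus boundedness of the $H^1$-part terms to show $\norm{\Theta_h(0,\widehat\sigma_h)}{V,k}^2\lesssim b(\uu_h,\Theta_h(0,\widehat\sigma_h))+\norm{\AA^{1/2}\nabla u_h}{}\,\norm{\Theta_h(0,\widehat\sigma_h)}{V,k}$; (ii) choose $\vv_h=(u_h,0)$, whence $b(\uu_h,\Theta_h(u_h,0))=\ip{\AA\nabla u_h}{\pwnabla\Theta_h(u_h,0)}+\ip{\bbeta\cdot\nabla u_h+\gamma u_h}{\Theta_h(u_h,0)}-\dual{\widehat\sigma_h}{\Theta_h(u_h,0)}_{\cS}$, and combine with the identity $\norm{\Theta_h(u_h,0)}{V,k}^2=a((u_h,0),\Theta_h(u_h,0))=k^{-1}\norm{\cdot}+b((u_h,0),\Theta_h(u_h,0))$ together with the coercivity $k^{-1}\norm{u_h}{}^2+\norm{\AA^{1/2}\nabla u_h}{}^2\le\norm{\Theta_h(u_h,0)}{V,k}^2$ and $\norm{\Theta_h(u_h,0)}{V,k}\lesssim(k^{-1}\norm{u_h}{}^2+\norm{\AA^{1/2}\nabla u_h}{}^2)^{1/2}$ from Lemma~\ref{lem:euler}, to show $\norm{\AA^{1/2}\nabla u_h}{}^2\lesssim b(\uu_h,\Theta_h(u_h,0))+\norm{\widehat\sigma_h}{-1/2,k}\,\norm{\AA^{1/2}\nabla u_h}{}$, the last term coming from the trace pairing bounded via Lemma~\ref{lem:traceidentity}; (iii) add a suitable multiple of the two estimates, test function $\vv_h=\alpha(u_h,0)+(0,\widehat\sigma_h)$ for a small $\alpha>0$ chosen to absorb the cross terms, and conclude $\enorm{\uu_h}_k^2=\norm{\AA^{1/2}\nabla u_h}{}^2+\norm{\Theta_h(0,\widehat\sigma_h)}{V,k}^2\lesssim b(\uu_h,\Theta_h\vv_h)\le\big(\sup_{\vv_h}b(\uu_h,\Theta_h\vv_h)/\enorm{\vv_h}_k\big)\,\enorm{\vv_h}_k\lesssim\big(\sup\ldots\big)\enorm{\uu_h}_k$, whence the claim after dividing. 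Finally, existence and uniqueness of the solution to \eqref{eq:ellipticProjection} follow from the inf-sup condition together with the boundedness, since $U_h$ is finite-dimensional and $b(\cdot,\Theta_h\cdot)$ is then a square system with trivial kernel.
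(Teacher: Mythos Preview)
Your boundedness argument has a genuine gap: the step $\norm{\Theta_h\ww_h}{V,k}\lesssim\norm{\ww_h}{U,k}$ (uniformly in $k$) is simply false. For $\ww_h=(w_h,0)$, Lemma~\ref{lem:euler} gives $\norm{\Theta_h(w_h,0)}{V,k}\ge k^{-1/2}\norm{w_h}{}$, which blows up as $k\to0$, whereas $\norm{\ww_h}{U,k}=\norm{\nabla w_h}{}$ does not. Your ``absorb and divide'' manipulation is circular: after substituting $k^{-1/2}\norm{w}{}\le\norm{\Theta_h\ww_h}{V,k}$ you only obtain $\norm{\Theta_h\ww_h}{V,k}\lesssim\norm{\ww_h}{U,k}+\norm{\Theta_h\ww_h}{V,k}$, which is vacuous since the implied constant is not small. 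Likewise, deducing the second bound from the first via $\enorm{\cdot}_k\eqsim\norm{\cdot}{U,k}$ on $U_h$ is illegitimate here because that equivalence (with $h,k$-independent constants) needs $hk^{-1/2}=\OO(1)$, which the lemma does not assume. The paper goes the other way: it proves the $\enorm{\ww_h}_k$ bound directly by splitting $\Theta_h\ww_h=w_h+\widetilde w_h$; the residual $\widetilde w_h$ satisfies a reduced equation from which one reads off $\norm{\widetilde w_h}{V,k}\lesssim k^{1/2}\norm{\nabla w_h}{}+\norm{\Theta_h(0,\widehat\chi_h)}{V,k}\lesssim\enorm{\ww_h}_k$, while $|b(\uu,w_h)|$ is handled directly since $w_h\in H_0^1(\Omega)$ kills the trace pairing. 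The first bound then follows from the trivial direction $\enorm{\cdot}_k\lesssim\norm{\cdot}{U,k}$.

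For the inf-sup, your step~(ii) runs into the same $k$-scaling problem: testing with $(u_h,0)$ brings in $\dual{\widehat\sigma_h}{\Theta_h(u_h,0)}_{\cS}$, and bounding it via Lemma~\ref{lem:traceidentity} produces a factor $\norm{\Theta_h(u_h,0)}{V,k}$, which again contains $k^{-1/2}\norm{u_h}{}$ and is not controlled by $\norm{\AA^{1/2}\nabla u_h}{}$ uniformly in $k$. The paper's key idea is to test with the \emph{full} $\vv_h=\uu_h$ and use the identity $b(\uu_h,\Theta_h\uu_h)=\norm{\Theta_h\uu_h}{V,k}^2-k^{-1}\ip{u_h}{\Theta_h\uu_h}$ together with Young's inequality and the Lemma~\ref{lem:euler} bound $k^{-1}\norm{u_h}{}^2+\norm{\AA^{1/2}\nabla u_h}{}^2\le\norm{\Theta_h\uu_h}{V,k}^2$ to extract the clean coercivity $\norm{\AA^{1/2}\nabla u_h}{}^2\le 2\,b(\uu_h,\Theta_h\uu_h)$. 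Your step~(i) then essentially matches how the paper handles the trace component.
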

\begin{proof}
  We show only the second boundedness estimate, as the first one follows from $\enorm\cdot_k\lesssim \norm{\cdot}{U,k}$.
  To that end let $v:= \Theta_h\ww_h=\Theta_h(w_h,\widehat\chi_h)$. 
  We use the splitting $v = w_h + \widetilde w_h$. 
  From the definition of the optimal test function we see that
  \begin{align*}
    &\frac1{k}\ip{w_h}{\delta v} + \ip{\AA\nabla w_h}{\pwnabla \delta v} 
    + \frac1{k}\ip{\widetilde w_h}{\delta v} + \ip{\AA\pwnabla \widetilde w_h}{\pwnabla \delta v} 
    \\
    &\qquad = \frac1{k}\ip{w_h}{\delta v} + \ip{\AA\nabla w_h}{\pwnabla\delta v} + \ip{\bbeta\cdot\nabla w_h}{\delta v} + \ip{\gamma w_h}{\delta v} - \dual{\widehat\chi_h}{\delta v}_{\cS}.
  \end{align*}
  Therefore the component $\widetilde w_h\in V_h$ satisfies
  \begin{align*}
    \frac1{k}\ip{\widetilde w_h}{\delta v} + \ip{\AA\pwnabla \widetilde w_h}{\pwnabla \delta v} 
    = \ip{\bbeta\cdot\nabla w_h}{\delta v} + \ip{\gamma w_h}{\delta v} - \dual{\widehat\chi_h}{\delta v}_{\cS}
    \quad\forall \delta v\in V_h.
  \end{align*}
  We note that $a( (0,\widehat\chi_h),\delta v) = -\dual{\widehat\chi_h}{\delta v}_{\cS}$
  and therefore the definition of the optimal test function yields 
  \begin{align*}
    |\dual{\widehat\chi_h}{\delta v}_{\cS}| \leq \norm{\Theta_h(0,\widehat\chi_h)}{V,k}\norm{\delta v}{V,k}.
  \end{align*}
  Standard estimates then show that
  \begin{align*}
    \norm{\widetilde w_h}{V,k}^2 \leq k^{1/2}\norm{\bbeta\cdot\nabla w_h+\gamma w_h}{}k^{-1/2}\norm{\widetilde w_h}{} +
    \norm{\Theta_h(0,\widehat\chi_h)}{V,k}\norm{\widetilde w_h}{V,k}.
  \end{align*}
  Using that $k\leq T$ we infer that 
  \begin{align}\label{eq:wTildeK}
    \norm{\widetilde w_h}{V,k}\lesssim k^{1/2}\norm{\nabla w_h}{} + \norm{\Theta_h(0,\widehat\chi_h)}{V,k} \lesssim \enorm{\ww_h}_k.
  \end{align}
  Observe that $\dual{\widehat\sigma_h}{w_h}_{\cS}=0$ since $w_h\in H_0^1(\Omega)$. Finally,
  \begin{align*}
    |b(\uu,v)| &\leq |b(\uu,w_h)| + |b(\uu,\widetilde w_h)| 
    \\ &\lesssim |\ip{\AA\nabla u}{\nabla w_h} + \ip{\bbeta\cdot\nabla u+\gamma u}{w_h}| + \norm{\uu}{U,k} \norm{\widetilde
    w_h}{V,k} \lesssim \norm{\uu}{U,k}\enorm{\ww_h}_k.
  \end{align*}
  This shows boundedness.
  In order to show the inf-sup condition, we first establish the coercivity estimate
  \begin{align}\label{eq:coercB}
    \norm{\AA^{1/2}\nabla u_h}{}^2 \leq 2 b(\uu_h,\Theta_h\uu_h).
  \end{align}
  Recall that the optimal test function is characterised by
  \begin{align*}
    \frac1k\ip{\Theta_h\uu_h}{\delta v} + \ip{\AA\pwnabla \Theta_h\uu_h}{\pwnabla \delta v} = a(\uu_h,\delta v) = \frac1k\ip{u_h}{\delta v} + b(\uu_h,\delta v)
    \quad\forall \delta v\in V_h.
  \end{align*}
  Setting $\delta v = v = \Theta_h\uu_h$ the last identity together with Young's inequality proves that
  \begin{align}\label{eq:ellipticProjection:proof:a}
    b(\uu_h,v) = \frac1{k}\norm{v}{}^2 + \norm{\AA^{1/2}\pwnabla v}{}^2 - \frac1{k}\ip{u_h}{v}
    \geq \frac1{k}\norm{v}{}^2 + \norm{\AA^{1/2}\pwnabla v}{}^2 - \frac1{2k} \norm{u_h}{}^2 - \frac1{2k} \norm{v}{}^2.
  \end{align}
  We are going to estimate $k^{-1/2}\norm{u_h}{}$: 
  From Lemma~\ref{lem:euler} we get that $k^{-1}\norm{u_h}{}^2 + \norm{\AA^{1/2}\nabla u_h}{}^2 \leq k^{-1}\norm{v}{}^2 + \norm{\AA^{1/2}\pwnabla v}{}^2$, hence,
  \begin{align*}
    \norm{\AA^{1/2}\nabla u_h}{}^2 \leq \norm{\AA^{1/2}\pwnabla v}{}^2 + \frac{1}k \norm{v}{}^2 - \frac1{k}\norm{u_h}{}^2.
  \end{align*}
  Combining this with~\eqref{eq:ellipticProjection:proof:a} yields
  \begin{align*}
    b(\uu_h,v) &\geq \frac1{k}\norm{v}{}^2 + \norm{\AA^{1/2}\pwnabla v}{}^2 - \frac1{2k} \norm{u_h}{}^2 - \frac1{2k} \norm{v}{}^2
    \\
    &= \left(\frac1{2}\norm{\AA^{1/2}\pwnabla v}{}^2 + \frac1{2k} \norm{v}{}^2 - \frac1{2k} \norm{u_h}{}^2\right) + \frac1{2}\norm{\AA^{1/2}\pwnabla v}{}^2
    \\
    &\geq \frac1{2}\norm{\AA^{1/2}\pwnabla v}{}^2 + \frac1{2}\norm{\AA^{1/2}\nabla u_h}{}^2 \geq \frac1{2}\norm{\AA^{1/2}\nabla u_h}{}^2.
  \end{align*}
  Now we are in position to establish the inf-sup condition
  \begin{align*}
    \enorm{\uu_h}_k \lesssim \sup_{0\neq \vv_h\in U_h} \frac{b(\uu_h,\Theta_h\vv_h)}{\enorm{\vv_h}_k}:
  \end{align*}
  With the boundedness estimates for $b(\cdot,\cdot)$ we have that
  \begin{align*}
     \norm{\Theta_h(0,\widehat\sigma_h)}{V,k}^2 
    &= a( (0,\widehat\sigma_h),\Theta_h(0,\widehat\sigma_h)) = b( (0,\widehat\sigma_h),\Theta_h(0,\widehat\sigma_h))
    \\ 
    &= b(\uu_h,\Theta_h(0,\widehat\sigma_h)) - b( (u_h,0),\Theta_h(0,\widehat\sigma_h))
    \\
    &\lesssim \sup_{0\neq \vv_h\in U_h} \frac{b(\uu_h,\Theta_h\vv_h)}{\enorm{\vv_h}_k} \norm{\Theta_h(0,\widehat\sigma_h)}{V,k} 
    + \norm{\AA^{1/2}\nabla u_h}{} \norm{\Theta_h(0,\widehat\sigma_h)}{V,k}.
  \end{align*}
  Applying the coercivity estimate~\eqref{eq:coercB} for the second term yields
  \begin{align*}
    \norm{\AA^{1/2}\nabla u_h}{}^2\lesssim b(\uu_h,\Theta_h\uu_h) = \frac{b(\uu_h,\Theta_h\uu_h)}{\enorm{\uu_h}_k}
    \enorm{\uu_h}_k.
  \end{align*}
  Combining the latter two estimates shows that
  \begin{align*}
    \enorm{\uu_h}_k^2 \lesssim \left(\sup_{0\neq \vv_h\in U_h} \frac{b(\uu_h,\Theta_h\vv_h)}{\enorm{\vv_h}_k}\right)^2 +
    \sup_{0\neq \vv_h\in U_h} \frac{b(\uu_h,\Theta_h\vv_h)}{\enorm{\vv_h}_k}\enorm{\uu_h}_k.
  \end{align*}
  Young's inequality finishes the proof of the $\inf$--$\sup$ condition. 
\end{proof}

\begin{lemma}\label{lem:ellipticProjection}
  Let $\EE_h\uu:=(u_h,\widehat\sigma_h)\in U_h$ be the solution of Problem~\eqref{eq:ellipticProjection}.
  Then it holds
  \begin{align*}
    \norm{\nabla(u-u_h)}{} \leq \enorm{\uu-\uu_h}_k \lesssim \inf_{\vv_h\in U_h} \norm{\uu-\vv_h}{U,k}. 
  \end{align*}
  
  If additionally $hk^{-1/2}=\OO(1)$ (see Lemma~\ref{lem:euler}) then
  \begin{align*}
    \norm{\widehat\sigma-\widehat\sigma_h}{-1/2,k} \lesssim \inf_{\vv_h\in U_h} \norm{\uu-\vv_h}{U,k}.
  \end{align*} 
\end{lemma}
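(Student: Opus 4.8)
The plan is to establish the Céa-type estimate by combining the inf--sup condition and boundedness of $b(\cdot,\Theta_h\cdot)$ from Lemma~\ref{lem:b} with a Galerkin-orthogonality argument. First I would note that by the definition~\eqref{eq:ellipticProjection} of $\EE_h$, we have $b(\uu-\uu_h,\Theta_h\ww_h)=0$ for all $\ww_h\in U_h$, i.e.\ $\EE_h\uu_h=\uu_h$ is the $b(\cdot,\Theta_h\cdot)$-projection. Given an arbitrary $\vv_h\in U_h$, write $\uu-\uu_h = (\uu-\vv_h) + (\vv_h-\uu_h)$, and apply the inf--sup condition to the discrete quantity $\vv_h-\uu_h\in U_h$:
\begin{align*}
  \enorm{\vv_h-\uu_h}_k \lesssim \sup_{0\neq \ww_h\in U_h} \frac{b(\vv_h-\uu_h,\Theta_h\ww_h)}{\enorm{\ww_h}_k}
  = \sup_{0\neq \ww_h\in U_h} \frac{b(\vv_h-\uu,\Theta_h\ww_h)}{\enorm{\ww_h}_k},
\end{align*}
using orthogonality to replace $\uu_h$ by $\uu$ in the numerator. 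The second boundedness estimate of Lemma~\ref{lem:b} bounds the numerator by $\norm{\vv_h-\uu}{U,k}\enorm{\ww_h}_k$, so $\enorm{\vv_h-\uu_h}_k \lesssim \norm{\uu-\vv_h}{U,k}$. A triangle inequality then gives $\enorm{\uu-\uu_h}_k \lesssim \norm{\uu-\vv_h}{U,k} + \enorm{\uu-\vv_h}_k$, and since $\enorm{\cdot}_k \lesssim \norm{\cdot}{U,k}$ (which holds for all of $U$, by boundedness of $\Theta_h(0,\cdot)$ as used in the proof of Lemma~\ref{lem:b}), taking the infimum over $\vv_h\in U_h$ yields the middle inequality. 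The left inequality $\norm{\nabla(u-u_h)}{}\le\enorm{\uu-\uu_h}_k$ is immediate from the definition of $\enorm{\cdot}_k$, since $\norm{\AA^{1/2}\nabla u}{}$ and $\norm{\nabla u}{}$ are equivalent (with $\AA$ uniformly positive definite) — actually one should be slightly careful here: the definition has $\norm{\AA^{1/2}\nabla u}{}^2$ as the first term, so strictly $\norm{\AA^{1/2}\nabla(u-u_h)}{}\le\enorm{\uu-\uu_h}_k$, and one absorbs the spectral bound on $\AA$ into the $\lesssim$; I would state it with the $\AA^{1/2}$-weighted gradient or simply note the norm equivalence.

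For the trace estimate under the additional hypothesis $hk^{-1/2}=\OO(1)$, the point is that $\enorm{\cdot}_k$ then controls the full $U_k$-norm on the discrete space: by Lemma~\ref{lem:euler}, $\norm{\widehat\chi_h}{-1/2,k}\lesssim\norm{\Theta_h(0,\widehat\chi_h)}{V,k}\le\enorm{(w_h,\widehat\chi_h)}_k$ for any $(w_h,\widehat\chi_h)\in U_h$. Apply this with $(w_h,\widehat\chi_h)=\uu_h-\vv_h$ where $\vv_h=(v_h,\widehat\xi_h)\in U_h$ is arbitrary, so $\norm{\widehat\sigma_h-\widehat\xi_h}{-1/2,k}\lesssim\enorm{\uu_h-\vv_h}_k\lesssim\enorm{\uu-\uu_h}_k+\enorm{\uu-\vv_h}_k\lesssim\inf\norm{\uu-\vv_h}{U,k}$ by the first part. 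Then a triangle inequality $\norm{\widehat\sigma-\widehat\sigma_h}{-1/2,k}\le\norm{\widehat\sigma-\widehat\xi_h}{-1/2,k}+\norm{\widehat\xi_h-\widehat\sigma_h}{-1/2,k}$ and taking the infimum over $\vv_h$ finishes it.

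\textbf{Main obstacle.} The argument is essentially a textbook quasi-optimality (Céa) argument once Lemma~\ref{lem:b} is in hand, so the real content is already spent there; the only genuinely delicate point in this lemma is making sure the estimate $\enorm{\cdot}_k\lesssim\norm{\cdot}{U,k}$ is available on all of $U$ (not just $U_h$), since the reverse direction in Lemma~\ref{lem:euler} is only stated for discrete arguments. This is fine because $\enorm{\uu}_k^2=\norm{\AA^{1/2}\nabla u}{}^2+\norm{\Theta_h(0,\widehat\sigma)}{V,k}^2$ and the bound $\norm{\Theta_h(0,\widehat\sigma)}{V,k}\lesssim\norm{\widehat\sigma}{-1/2,k}$ holds for all $\widehat\sigma\in H^{-1/2}(\cS)$ under $hk^{-1/2}=\OO(1)$ — but note this $\OO(1)$ assumption is \emph{not} invoked for the first (middle) inequality of the lemma, so for that part one should instead argue directly: the Fortin/trace arguments in the proof of Lemma~\ref{lem:b} show $b(\uu,\Theta_h\ww_h)$ is controlled by $\norm{\uu}{U,k}$ without needing $\enorm{\uu-\vv_h}_k\lesssim\norm{\uu-\vv_h}{U,k}$; one simply uses the triangle inequality at the level of $b(\cdot,\Theta_h\cdot)$ rather than at the level of the norms. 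I would organize the write-up so that the middle inequality uses only boundedness + inf--sup + orthogonality (all unconditional), and the trace inequality separately invokes Lemma~\ref{lem:euler} under $hk^{-1/2}=\OO(1)$.
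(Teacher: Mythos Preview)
Your proposal is correct and follows essentially the same C\'ea-type argument as the paper: triangle inequality, inf--sup on the discrete difference, Galerkin orthogonality, and the boundedness of $b(\cdot,\Theta_h\cdot)$ from Lemma~\ref{lem:b}; the trace estimate likewise matches, using Lemma~\ref{lem:euler} on the discrete part under $hk^{-1/2}=\OO(1)$. One clarification on your ``main obstacle'': the bound $\enorm{\cdot}_k\lesssim\norm{\cdot}{U,k}$ in fact holds unconditionally on all of $U$ (since $\norm{\Theta_h(0,\widehat\sigma)}{V,k}^2=-\dual{\widehat\sigma}{\Theta_h(0,\widehat\sigma)}_{\cS}\lesssim\norm{\widehat\sigma}{-1/2,k}\norm{\Theta_h(0,\widehat\sigma)}{V,k}$ by Lemma~\ref{lem:traceidentity}; it is only the reverse inequality that needs $hk^{-1/2}=\OO(1)$), so the triangle-inequality step $\enorm{\uu-\vv_h}_k\lesssim\norm{\uu-\vv_h}{U,k}$ is available directly---but your proposed organization is exactly what the paper does anyway.
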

\begin{proof}
  The best approximation properties follow from standard arguments. We give the details only for sake of completeness:
  Let $\ww_h\in U_h$ be arbitrary, then
  \begin{align*}
    \enorm{\uu-\uu_h}_k \leq \enorm{\uu-\ww_h}_k + \enorm{\uu_h-\ww_h}_k,
  \end{align*}
  and using~\eqref{eq:ellipticProjection} and the preceding lemma,
  \begin{align*}
    \enorm{\uu_h-\ww_h}_k 
    \lesssim \sup_{0\neq \vv_h\in U_h} \frac{b(\uu_h-\ww_h,\Theta_h\vv_h)}{\enorm{\vv_h}_k}
    \leq \sup_{0\neq \vv_h\in U_h} \frac{b(\uu-\ww_h,\Theta_h\vv_h)}{\enorm{\vv_h}_k}\lesssim \norm{\uu-\ww_h}{U,k}.
  \end{align*}
  
  Since $\enorm{\uu}_k\gtrsim \norm{\nabla u}{}$ this shows the first of the claimed best-approximation estimates. 
  The second follows by noting that $\norm{\widehat\sigma_h}{-1/2,k}\lesssim \norm{\Theta_h(0,\widehat\sigma_h)}{V,k}$ if $hk^{-1/2}=\OO(1)$ (see Lemma~\ref{lem:euler}) and considering
  \begin{align*}
    \norm{\widehat\sigma-\widehat\sigma_h}{-1/2,k}\leq\norm{\uu-\uu_h}{U,k} \leq \norm{\uu-\ww_h}{U,k} +
    \norm{\uu_h-\ww_h}{U,k} \lesssim \norm{\uu-\ww_h}{U,k} + \enorm{\uu_h-\ww_h}_k.
  \end{align*}
  The last term is handled as before which concludes the proof.
\end{proof}

Combining the latter quasi-best approximation result with standard approximation properties yields
\begin{corollary}\label{cor:convRatesEnergyNorm}
Suppose that the components of $\uu = (u,\widehat\sigma)$ are sufficiently smooth, then
  \begin{align*}
    \enorm{\uu-\EE_h\uu}_k  = \OO(h^{p+1}).
  \end{align*}

  If additionally $hk^{-1/2}=\OO(1)$ then
  \begin{align*}
      \norm{\uu-\EE_h\uu}{U,k} = \OO(h^{p+1}).
  \end{align*}
\end{corollary}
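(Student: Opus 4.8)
The plan is to read off both estimates from the quasi-best approximation bounds of Lemma~\ref{lem:ellipticProjection}, once we know that a smooth $\uu=(u,\widehat\sigma)$ is approximated to order $h^{p+1}$ from $U_h$ in the norm $\norm{\cdot}{U,k}$. Indeed, Lemma~\ref{lem:ellipticProjection} gives $\enorm{\uu-\EE_h\uu}_k\lesssim \inf_{\vv_h\in U_h}\norm{\uu-\vv_h}{U,k}$ and, when $hk^{-1/2}=\OO(1)$, also $\norm{\widehat\sigma-\widehat\sigma_h}{-1/2,k}\lesssim\inf_{\vv_h\in U_h}\norm{\uu-\vv_h}{U,k}$; combined with $\norm{\nabla(u-u_h)}{}\le\enorm{\uu-\EE_h\uu}_k$, the latter yields $\norm{\uu-\EE_h\uu}{U,k}\lesssim\inf_{\vv_h\in U_h}\norm{\uu-\vv_h}{U,k}$. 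So everything reduces to the (mesh-dependent) simultaneous approximation estimate
\begin{align*}
  \inf_{\vv_h=(v_h,\widehat\tau_h)\in U_h}\left(\norm{\nabla(u-v_h)}{}^2+\norm{\widehat\sigma-\widehat\tau_h}{-1/2,k}^2\right)=\OO(h^{2(p+1)}).
\end{align*}

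For the field component I would pick $v_h\in\PP^{p+1}(\TT)\cap H_0^1(\Omega)$ to be a Scott--Zhang--type quasi-interpolant of $u$; standard polynomial approximation theory gives $\norm{\nabla(u-v_h)}{}\lesssim h^{p+1}$ provided $u$ lies, say, in $H^{p+2}(\Omega)$ (or is $\TT$-piecewise so). For the flux component I use that, by the very definition of $H^{-1/2}(\cS)$, there is $\ssigma\in\Hdivset\Omega$ with $\widehat\sigma|_{\partial K}=\ssigma\cdot\nn_K|_{\partial K}$; assuming $\ssigma$ and $\div\ssigma$ smooth enough, let $\ssigma_h$ be the order-$p$ Raviart--Thomas interpolant of $\ssigma$ and set $\widehat\tau_h:=\ssigma_h\cdot\nn_K|_{\partial K}\in\PP^p(\cS)$ (recall $\PP^p(\cS)$ is exactly the normal-trace space of $\RT$ of order $p$). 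Since $\ssigma_h$ is an admissible competitor in the infimum defining $\norm{\cdot}{-1/2,k}$,
\begin{align*}
  \norm{\widehat\sigma-\widehat\tau_h}{-1/2,k}^2\le\norm{\ssigma-\ssigma_h}{}^2+k\norm{\div(\ssigma-\ssigma_h)}{}^2\le\norm{\ssigma-\ssigma_h}{}^2+T\norm{\div\ssigma-\Pi^p\div\ssigma}{}^2,
\end{align*}
where I used $k\le T$ and the commuting-diagram identity $\div\ssigma_h=\Pi^p\div\ssigma$ with $\Pi^p$ the $L^2(\Omega)$-orthogonal projection onto $\PP^p(\TT)$. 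The standard Raviart--Thomas and $L^2$-projection estimates bound the right-hand side by $h^{2(p+1)}$ (up to constants depending on the $H^{p+1}(\Omega)$-norms of $\ssigma$ and $\div\ssigma$), which closes the argument; the first claim of the corollary is unconditional, the second uses $hk^{-1/2}=\OO(1)$ only through Lemma~\ref{lem:ellipticProjection}.

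I do not expect a genuine obstacle here: this is a routine corollary, essentially classical approximation theory fed into Lemma~\ref{lem:ellipticProjection}. The two points worth a moment's care are that the time-step weight in the trace norm must not spoil the rate --- which is precisely why $k\le T$ is invoked rather than any decay in $k$ --- and that, as always in DPG a priori analysis, the skeleton datum $\widehat\sigma$ must be approximated through a conforming $\Hdivset\Omega$ representative and its Raviart--Thomas interpolant, not directly on $\cS$. The phrase ``sufficiently smooth'' is thus made precise as $u\in H^{p+2}(\Omega)$ (piecewise on $\TT$) and $\ssigma,\div\ssigma\in H^{p+1}(\Omega)$, with the hidden constants depending on the respective Sobolev norms.
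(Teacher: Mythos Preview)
Your proposal is correct and follows exactly the route the paper intends: the corollary is stated without proof beyond the sentence ``Combining the latter quasi-best approximation result with standard approximation properties yields,'' and you have simply spelled out those standard approximation properties (Scott--Zhang for $u$, Raviart--Thomas with the commuting diagram for $\widehat\sigma$, and $k\le T$ to absorb the time-step weight) and fed them into Lemma~\ref{lem:ellipticProjection}. Nothing is missing or different.
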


\section{Optimal error estimate in energy norm}\label{sec:strong}
This section is devoted to prove optimal error estimates in the $H^1(\Omega)$ norm of the $n$-th solution of the backward Euler method~\eqref{eq:euler}.
We make the same assumptions on the regularity of solutions as in~\cite{thomee}.

\begin{theorem}\label{thm:energyerror}
  Let $\uu_h^n=(u_h^n,\widehat\sigma_h^n)\in U_h$ denote the solution of~\eqref{eq:euler}.
  Suppose that the components of $\uu(t;\cdot)$ are sufficiently regular. 
  Under these assumptions there exists $k_0$ (independent of $h$) such that for $k<k_0$ the solution satisfies
  \begin{align*}
    \norm{\nabla u^n-\nabla u_h^n}{} = \OO(h^{p+1}+k) + \OO(\norm{\nabla(u^0-u_h^0)}{}).
  \end{align*}

  If additionally, $hk^{-1/2}=\OO(1)$ then
  \begin{align*} 
    \norm{\widehat\sigma^n-\widehat\sigma_h^n}{-1/2,k} = \OO(h^{p+1}+k) + \OO(\norm{\nabla(u^0-u_h^0)}{}).
  \end{align*}
\end{theorem}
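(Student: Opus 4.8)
The plan is to use the elliptic projection operator $\EE_h$ as an intermediate approximation and split the error in the classical Wheeler manner: writing $\EE_h\uu^n=:(w_h^n,\widehat\rho_h^n)$, we decompose
\begin{align*}
  u^n-u_h^n = (u^n - w_h^n) + (w_h^n - u_h^n) =: \theta^n + \phi_h^n,
\end{align*}
and similarly for the flux components. The term $\theta^n$ is controlled directly by the quasi-optimality of the elliptic projection (Lemma~\ref{lem:ellipticProjection} / Corollary~\ref{cor:convRatesEnergyNorm}), contributing $\OO(h^{p+1})$ in both the $\norm{\nabla\cdot}{}$ and $\norm{\cdot}{-1/2,k}$ norms. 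So the heart of the matter is estimating the fully discrete component $\phi_h^n=(w_h^n-u_h^n,\widehat\rho_h^n-\widehat\sigma_h^n)\in U_h$.

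For $\phi_h^n$ I would derive its defining equation. Since $\uu_h^n$ solves \eqref{eq:euler} and, by the definition \eqref{eq:ellipticProjection} of $\EE_h$, $b(\EE_h\uu^n,\Theta_h\ww_h)=b(\uu^n,\Theta_h\ww_h)$, while the exact solution satisfies the continuous equation $a(\uu^n,v)=\ip{f^n+\dot u^n - \tfrac1k(u^n-u^{n-1})}{v}+\tfrac1k\ip{u^{n-1}}{v}$ in a suitable sense, one obtains for all $\ww_h\in U_h$
\begin{align*}
  a(\phi_h^n,\Theta_h\ww_h) = \frac1k\ip{w_h^n-w_h^{n-1}}{\Theta_h\ww_h} - \ip{\dot u^n}{\Theta_h\ww_h} + \frac1k\ip{u^n - u_h^0\text{-history terms}}{\Theta_h\ww_h},
\end{align*}
which after collecting and using $a=\tfrac1k\ip\cdot\cdot+b$ reduces to $b(\phi_h^n,\Theta_h\ww_h)$ plus $\tfrac1k\ip{w_h^n-w_h^{n-1}-(\text{discrete derivative bits})}{\Theta_h\ww_h}$ balanced against $\tfrac1k\ip{\phi_h^{n-1}}{\Theta_h\ww_h}$ and a consistency term. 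The right-hand side splits into a \emph{truncation error} $\tau^n := \dot u^n - \tfrac1k(u^n-u^{n-1})$ with $\norm{\tau^n}{}\lesssim k\sup_{[t_{n-1},t_n]}\norm{\ddot u}{}$, and a term of the form $\tfrac1k(\theta^n-\theta^{n-1})-\dot\theta^n$-type contributions; the latter is handled by writing $w_h^n-w_h^{n-1}=\int_{t_{n-1}}^{t_n}\dot w_h\,ds$ and using that $\partial_t$ commutes with $\EE_h$ (the operator is $t$-independent since $\AA,\bbeta,\gamma$ are), so $\dot w_h = $ elliptic projection of $\dot u$, giving $\norm{w_h^n-w_h^{n-1}-(u^n-u^{n-1})}{}\le\int_{t_{n-1}}^{t_n}\norm{\dot u - \dot w_h}{}\,ds \lesssim k\,h^{p+1}$.

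Then I would test the $\phi_h^n$-equation cleverly: the natural choice is $\ww_h=\phi_h^n$, producing on the left $\tfrac1k\ip{\phi_h^n}{\Theta_h\phi_h^n}+b(\phi_h^n,\Theta_h\phi_h^n)$. Using the coercivity estimate \eqref{eq:coercB} (which gives $b(\phi_h^n,\Theta_h\phi_h^n)\ge\tfrac12\norm{\AA^{1/2}\nabla\phi_h}{}^2$) together with the first estimate of Lemma~\ref{lem:euler} for the $\tfrac1k\ip\cdot\cdot$ term, and bounding the right-hand side via Cauchy--Schwarz and $k^{-1/2}\norm{\Theta_h\phi_h^n}{}\le\norm{\Theta_h\phi_h^n}{V,k}$ (as in the stability theorem), I expect to arrive at a recursion of the form
\begin{align*}
  \norm{\nabla\phi_h^n}{}^2 + \frac1k\norm{\phi_h^n}{}^2 \lesssim \norm{\phi_h^{n-1}}{}\cdot\big(\text{stuff}\big) + (k + h^{p+1})\,(\cdots),
\end{align*}
or, more cleanly for the energy norm, a bound $\norm{\nabla\phi_h^n}{}\lesssim\norm{\nabla\phi_h^{n-1}}{}+k\cdot\tau\text{-bound}+h^{p+1}$; summing the telescoping/geometric recursion over $n$ steps with $nk\le T$ yields $\norm{\nabla\phi_h^n}{}\lesssim\norm{\nabla\phi_h^0}{}+k+h^{p+1}$, and $\norm{\nabla\phi_h^0}{}$ is absorbed into $\OO(\norm{\nabla(u^0-u_h^0)}{})+\OO(h^{p+1})$ via the triangle inequality and Corollary~\ref{cor:convRatesEnergyNorm}. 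Combining with the $\theta^n$ bound gives the first claimed estimate. For the flux estimate, once $\norm{\nabla\phi_h^n}{}$ is controlled, I would use the second part of Lemma~\ref{lem:euler} — valid under $hk^{-1/2}=\OO(1)$ — namely $\norm{\widehat\sigma_h}{-1/2,k}\lesssim\norm{\Theta_h(0,\widehat\sigma_h)}{V,k}\lesssim\norm{\Theta_h\uu_h}{V,k}$, applied to $\phi_h^n$, together with $\norm{\Theta_h\phi_h^n}{V,k}^2 = a(\phi_h^n,\Theta_h\phi_h^n)$ which is bounded by the already-estimated quantities; then add $\norm{\widehat\sigma^n-\widehat\rho_h^n}{-1/2,k}=\OO(h^{p+1})$ from Lemma~\ref{lem:ellipticProjection}.

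\textbf{Main obstacle.} The delicate point is the bookkeeping of the $k^{-1}$-weighted terms. Because $a(\cdot,\cdot)$ is not bounded independently of $k$ and $\ip\cdot\cdot_{V,k}$ carries a $k^{-1}$ weight, one must be careful that the factor $\tfrac1k$ multiplying $\ip{\phi_h^{n-1}}{\Theta_h\phi_h^n}$ does not blow up the recursion: the saving grace is the pairing $k^{-1}\ip{\phi_h^{n-1}}{\Theta_h\phi_h^n}\le k^{-1/2}\norm{\phi_h^{n-1}}{}\cdot k^{-1/2}\norm{\Theta_h\phi_h^n}{}\le k^{-1/2}\norm{\phi_h^{n-1}}{}\cdot\norm{\Theta_h\phi_h^n}{V,k}$, after which the $V,k$-norm is absorbed by coercivity — but turning this into a \emph{discrete Grönwall} argument that survives $N=T/k$ iterations without picking up a spurious negative power of $k$ requires choosing the test function and the splitting of the right-hand side just so, and this is exactly where the smallness condition $k<k_0$ enters (to absorb a $Ck\,\norm{\nabla\phi_h^n}{}^2$ term onto the left).
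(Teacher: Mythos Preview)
Your overall framework --- the Wheeler splitting via the elliptic projection, the derivation of an error equation for $\ww^n:=\EE_h\uu^n-\uu_h^n$, the decomposition of the consistency term $e_h^n$ into an elliptic-projection error of $\dot\uu$ plus a Taylor remainder, and the use of discrete Gr\"onwall --- matches the paper. But your choice of test function is the genuine gap. Testing with $\ww_h=\phi_h^n$ and invoking coercivity~\eqref{eq:coercB} gives, after the Cauchy--Schwarz step you describe, only
\[
  \norm{\AA^{1/2}\nabla w^n}{} \le \norm{\Theta_h\ww^n}{V,k} \le k^{-1/2}\bigl(\norm{e_h^n}{}+\norm{w^{n-1}}{}\bigr),
\]
so even after feeding in the $L^2$ recursion $\norm{w^{n-1}}{}\le\sum_j\norm{e_h^j}{}+\norm{w^0}{}$ you end up with $\norm{\nabla w^n}{}\lesssim k^{-1/2}(h^{p+1}+k+\norm{w^0}{})$. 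The $k^{-1/2}$ prefactor is not removable by ``choosing $\delta$ and $k<k_0$ small'': the loss is structural, and your target recursion $\norm{\nabla\phi_h^n}{}\lesssim\norm{\nabla\phi_h^{n-1}}{}+\ldots$ simply does not emerge from testing with $\phi_h^n$.

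The paper's remedy is to test the error equation~\eqref{eq:erroreq} with $v_{\mathrm{opt}}=\Theta_h(w^n-w^{n-1},0)$, i.e.\ the \emph{time increment} of the field component only. The point is that $\ip{\AA\nabla w^n}{\nabla(w^n-w^{n-1})}$ then produces the telescoping identity $\tfrac12\norm{\AA^{1/2}\nabla w^n}{}^2-\tfrac12\norm{\AA^{1/2}\nabla w^{n-1}}{}^2+\tfrac12\norm{\AA^{1/2}\nabla(w^n-w^{n-1})}{}^2$ directly on the left, which is what makes a $k$-uniform gradient recursion possible. This choice introduces two further complications you have not addressed: the optimal test function splits as $\Theta_h(w^n-w^{n-1},0)=(w^n-w^{n-1})+\widetilde v_h$ with a remainder $\widetilde v_h$ controlled via~\eqref{eq:wTildeK}, and the term $b(\ww^n,\widetilde v_h)$ drags in $\norm{\Theta_h(0,\widehat\chi^n)}{V,k}$, which must itself be bounded by going back to the error equation and testing with $\Theta_h(0,\widehat\chi^n)$ (this is~\eqref{eq:estChiN}). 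Only after these steps does one obtain $\norm{\AA^{1/2}\nabla w^n}{}^2-\norm{\AA^{1/2}\nabla w^{n-1}}{}^2\lesssim k^{-1}\norm{e_h^n}{}^2+k\norm{\nabla w^{n-1}}{}^2$, to which discrete Gr\"onwall applies; the smallness $k<k_0$ enters when absorbing $(k^{1/2}+\delta)\norm{\AA^{1/2}\nabla(w^n-w^{n-1})}{}^2$ into the left-hand side. Your flux argument via Lemma~\ref{lem:euler} is essentially right once $\norm{\nabla w^{n-1}}{}$ is controlled, and the paper indeed derives the trace bound from~\eqref{eq:estChiN}.
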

\begin{proof}
  With the elliptic projection operator $\EE_h$ (see~\eqref{eq:ellipticProjection}) we consider the splitting
  \begin{align*}
    \uu^n-\uu_h^n = (\uu^n-\EE_h\uu^n)+(\EE_h\uu^n-\uu_h^n).
  \end{align*}
  We may also use the norm $\enorm{\cdot}_k$ defined in the proof of Lemma~\ref{lem:ellipticProjection}.

  \noindent
  \textbf{Step 1.}
  By Corollary~\ref{cor:convRatesEnergyNorm} we have
  \begin{align*}
    \enorm{\uu^n-\EE_h\uu^n}_k = \OO(h^{p+1}).
  \end{align*}

  \noindent 
  \textbf{Step 2.}
  We derive error equations: First, write $\EE_h\uu^n = (\EE_h^1\uu^n,\EE_h^2\uu^n)$. 
  Then, by~\eqref{eq:ellipticProjection}
  \begin{align*}
    a(\EE_h\uu^n,v_\mathrm{opt}) &= \frac1{k}\ip{\EE_h^1\uu^n}{v_\mathrm{opt}} + b(\EE_h\uu^n,v_\mathrm{opt}) 
    = \frac1{k}\ip{\EE_h^1\uu^n}{v_\mathrm{opt}} + b(\uu^n,v_\mathrm{opt}) 
    \\
    &= \ip{k^{-1}\EE_h^1\uu^n+f^n-\dot{u}^n}{v_\mathrm{opt}} \quad\forall v_\mathrm{opt} \in \Theta_h(U_h).
  \end{align*}
  Second, by~\eqref{eq:euler}
  \begin{align*}
    a(\uu_h^n,v_\mathrm{opt}) = \ip{f^n+k^{-1}u_h^{n-1}}{v_\mathrm{opt}} \quad\forall v_\mathrm{opt} \in \Theta_h(U_h).
  \end{align*}
  Third, combining both identities and writing $\ww^n = (w^n,\widehat\chi^n):= \EE_h\uu^n-\uu_h^n$ yields
  \begin{align}\label{eq:erroreqOld}
    \frac1{k}\ip{w^n}{v_\mathrm{opt}} + b(\ww^n,v_\mathrm{opt}) = \frac1{k}\ip{e_h^n+w^{n-1}}{v_\mathrm{opt}} \quad\forall v_\mathrm{opt} \in \Theta_h(U_h),
  \end{align}
  where 
  \begin{align}\label{def:ehn}
    e_h^n := \EE_h^1(\uu^n-\uu^{n-1})-k\dot{u}^n.
  \end{align}
  Putting the term with $w^{n-1}$ on the left-hand side yields
  \begin{align}\label{eq:erroreq}
    \frac1{k}\ip{w^n-w^{n-1}}{v_\mathrm{opt}} + b(\ww^n,v_\mathrm{opt}) = \frac1{k}\ip{e_h^n}{v_\mathrm{opt}} \quad\forall v_\mathrm{opt} \in \Theta_h(U_h),
  \end{align}

  \noindent 
  \textbf{Step 3.}
  We use the test function $v_\mathrm{opt}=\Theta_h(v_h,0) = v_h +\widetilde v_h$ with $v_h = w^n-w^{n-1}$. 
  Reordering the terms in the error equations~\eqref{eq:erroreq},
  \begin{align}\label{eq:erreqWithC}
    \begin{split}
      \frac1k \norm{w^n-w^{n-1}}{}^2 + \frac12 \norm{\AA^{1/2}\nabla w^n}{}^2 - &\frac12 \norm{\AA^{1/2}\nabla w^{n-1}}{}^2 +
      \frac12 \norm{\AA^{1/2}\nabla(w^n - w^{n-1})}{}^2\\
      &=
    \frac1{k}\ip{e_h^n}{v_h+\widetilde v_h} - \frac1{k}\ip{w^n-w^{n-1}}{\widetilde v_h} 
    \\ &\qquad - b(\ww^n,\widetilde v_h) -\ip{\bbeta\cdot\nabla w^n}{v_h} - \ip{\gamma w^n}{v_h} \\
    &= r_1+r_2+r_3+r_4+r_5.
    \end{split}
  \end{align}

  \noindent 
  \textbf{Step 4.}
  We estimate the contributions $r_j$ of the right-hand side of the error equation~\eqref{eq:erreqWithC}.
  Recall that $v_h = w^n-w^{n-1}$.
  Throughout we use Young's inequality with a parameter $\delta>0$ and the estimate from~\eqref{eq:wTildeK}, i.e., $k^{-1/2}\norm{\widetilde v_h}{} \leq \norm{\widetilde v_h}{V,k}\lesssim k^{1/2}\norm{\nabla (w^n-w^{n-1})}{}$. 
  First, 
  \begin{align*}
    |r_1| &=  \frac1{k}|\ip{e_h^n}{v_h+\widetilde v_h}| \leq k^{-1/2} \norm{e_h^n}{} k^{-1/2}\norm{v_h}{} 
    + k^{-1/2} \norm{e_h^n}{} k^{-1/2}\norm{\widetilde v_h}{} \\
    &\lesssim  \delta^{-1} k^{-1}\norm{e_h^n}{}^2 + \delta k^{-1}\norm{w^n-w^{n-1}}{}^2 + \delta k\norm{\AA^{1/2}\nabla (w^n-w^{n-1})}{}^2.
  \end{align*}
  Second,
  \begin{align*}
    |r_2| &\lesssim k^{-1/2}\norm{w^n-w^{n-1}}{}k^{-1/2}\norm{\widetilde v_h}{} \leq \delta k^{-1} \norm{w^n-w^{n-1}}{}^2 
    + \delta^{-1}k\norm{\AA^{1/2}\nabla (w^n-w^{n-1})}{}^2.
  \end{align*}
  Third, recall the definition of the (discrete) optimal test function, i.e., 
  \begin{align*}
    |\dual{\widehat \chi^n}{\widetilde v_h}_{\cS}| = |a( (0,\widehat\chi^n),\widetilde v_h)| = 
    |\ip{\Theta_h(0,\widehat\chi^n)}{\widetilde v_h}_{V,k}|
  \end{align*}
  which together with the Cauchy--Schwarz inequality shows that
  \begin{align*}
    |r_3| &= |b(\ww^n,\widetilde v_h)| \lesssim (\norm{\nabla w^n}{}+\norm{\Theta_h(0,\widehat\chi^n)}{V,k})k^{1/2}\norm{\nabla v_h}{}
    \\
    &\leq \norm{\nabla (w^n-w^{n-1})}{}k^{1/2}\norm{\nabla v_h}{} + k^{1/2}\norm{\nabla w^{n-1}}{} \norm{\nabla v_h}{}
    + k^{1/2}\norm{\Theta_h(0,\widehat\chi^n)}{V,k}\norm{\nabla v_h}{}.
  \end{align*}
  We need to estimate $\norm{\Theta_h(0,\widehat\chi^n)}{V,k}$: Using the error equation~\eqref{eq:erroreqOld}
  \begin{align*}
    \frac1k\ip{w^n}{\Theta_h(0,\widehat\chi^n)} + b(\ww^n,\Theta_h(0,\widehat\chi^n)) = \frac1k\ip{e_h^n+w^{n-1}}{\Theta_h(0,\widehat\chi^n)}
  \end{align*}
  and the definition of $\Theta_h$ we get that 
  \begin{align*}
    \underbrace{\frac1{k}\ip{\Theta_h(0,\widehat\chi^n)}{w^n}+\ip{\AA\pwnabla \Theta_h(0,\widehat\chi^n)}{\pwnabla w^n}}_{=-\dual{\widehat\chi^n}{w^n}_{\cS}=0} &+ \ip{\bbeta\cdot\nabla w^n+\gamma w^n}{\Theta_h(0,\widehat\chi^n)} + \norm{\Theta_h(0,\widehat\chi^n)}{V,k}^2
    \\&\qquad= \frac1k\ip{e_h^n+w^{n-1}}{\Theta_h(0,\widehat\chi^n)}.
  \end{align*}
  Using the definition of $\Theta_h$ another time shows that
  \begin{align*}
    \frac1k\ip{w^{n-1}}{\Theta_h(0,\widehat\chi^n)} 
    = -\ip{\AA\pwnabla\Theta_h(0,\widehat\chi^n)}{\nabla w^{n-1}} -
    \underbrace{\dual{\widehat\chi^n}{w^{n-1}}_{\cS}}_{=0} \lesssim \norm{\Theta_h(0,\widehat\chi^n)}{V,k}\norm{\nabla w^{n-1}}{}.
  \end{align*}
  Combining the latter two estimates together with standard estimates gives
  \begin{align*}
    \norm{\Theta_h(0,\widehat\chi^n)}{V,k} \lesssim k^{-1/2}\norm{e_h^n}{} + \norm{\nabla w^{n-1}}{} + k^{1/2}\norm{\nabla w^{n}}{}.
  \end{align*}
  Using the error equations again together with Lemma~\ref{lem:euler} it is easy to see that $k^{1/2}\norm{\nabla w^n}{}\lesssim \norm{e_h^n}{} + \norm{\nabla w^{n-1}}{}$. Therefore, 
  \begin{align}\label{eq:estChiN}
    \norm{\Theta_h(0,\widehat\chi^n)}{V,k} \lesssim k^{-1/2}\norm{e_h^n}{} + \norm{\nabla w^{n-1}}{}.
  \end{align}
  For the term $r_3$ we thus get the estimate
  \begin{align*}
    |r_3| \lesssim (k^{1/2}+\delta) \norm{\AA^{1/2}\nabla(w^n-w^{n-1})}{}^2 + k \delta^{-1} \norm{\nabla w^{n-1}}{}^2 + \delta^{-1}\norm{e_h^n}{}^2.
  \end{align*}
  Fourth, 
  \begin{align*}
    |r_4| & \lesssim k^{1/2}\norm{\nabla w^n}{}k^{-1/2}\norm{v_h}{} 
    \lesssim k \delta^{-1} \norm{\AA^{1/2}\nabla (w^n-w^{n-1})}{} + k\delta^{-1}\norm{\AA^{1/2}\nabla w^{n-1}}{}^2 + \delta k^{-1}\norm{w^n-w^{n-1}}{}^2.
  \end{align*}
  Fifth, using Poincar\'e's inequality,
  \begin{align*}
    |r_5| \lesssim \norm{w^n}{} \norm{v_h}{} &\lesssim k\delta^{-1}\norm{\nabla w^n}{}^2 + \delta k^{-1}\norm{v_h}{}{}^2\\
    & \lesssim k\delta^{-1}\norm{\AA^{1/2}\nabla(w^n - w^{n-1})}{}^2 + k\delta^{-1}\norm{\AA^{1/2}\nabla w^{n-1}}{}^2 + \delta k^{-1}\norm{w^n-w^{n-1}}{}{}^2.
  \end{align*}
  
  \noindent 
  \textbf{Step 5.}
  Using the bounds for the $r_j$ in the error equation~\eqref{eq:erreqWithC}, choosing first $\delta$ sufficiently small independently of $k$,
  and then $k\leq k_0$ sufficiently small in dependence of $\delta$,
  and taking into account the norm equivalence
  $\norm{\AA^{1/2}\nabla(\cdot)}{}\eqsim \norm{\nabla(\cdot)}{}$, we obtain
  \begin{align*}
    \norm{\AA^{1/2}\nabla w^n}{}^2 - \norm{\AA^{1/2}\nabla w^{n-1}}{}^2 \lesssim k^{-1}\norm{e_h^n}{}^2 + k \norm{\nabla w^{n-1}}{}^2.
  \end{align*}
  Iterating this estimate,
  applying the discrete Gronwall inequality~\cite[Lemma~10.5]{thomee}, $\sum_{j=1}^n k \leq T$ and norm equivalence $\norm{\AA^{1/2}\nabla(\cdot)}{}\eqsim \norm{\nabla(\cdot)}{}$ once again show that
  \begin{align*}
    \norm{\nabla w^n}{}^2 \lesssim \sum_{j=1}^n \frac1{k} \norm{e_h^j}{}^2 + \norm{\nabla w^0}{}^2.
  \end{align*}

  \noindent 
  \textbf{Step 6.}
  The basic ideas to estimate the sum $\sum_{j=1}^n \frac1{k} \norm{e_h^j}{}^2$ are the same as given 
  in~\cite[Section~1]{thomee} with some modifications, i.e., the use of elliptic projection operators and its properties defined in the present work. 
  For the sake of completeness we recall the main steps where we closely follow the presentation of our own work~\cite{LSQparabolic}:
  We write the error $e_h^j$ defined in~\eqref{def:ehn} as
  \begin{align*}
    e_h^j := e_h^{j,1}+e_h^{j,2} := [\EE_h^1(\uu^j-\uu^{j-1})-(u^j-u^{j-1})] + [u^j-u^{j-1}-k\dot u^j].
  \end{align*}
  We write $\uu^j-\uu^{j-1} = \int_{t_{j-1}}^{t_j} \dot{\uu}(s)\,\mathrm{d}s$. Then, the first term is estimated with Corollary~\ref{cor:convRatesEnergyNorm} and the Cauchy--Schwarz inequality in the time variable, yielding
  \begin{align*}
    \norm{e_h^{j,1}}{}\lesssim \int_{t_{j-1}}^{t_j} h^{p+1} C(\dot{\uu}(s)) \,\mathrm{d}s 
    \leq k^{1/2} \left( \int_{t_{j-1}}^{t_j} h^{2(p+1)} C(\dot{\uu}(s))^2 \,\mathrm{d}s\right)^{1/2}.
  \end{align*}
  Here, $C(\dot{\uu}(s))$ depends on higher-order Sobolev norms of $\dot{\uu}(s)$. 
  Similar arguments together with a Taylor expansion show for the second contribution $e_h^{j,2}$ that
  \begin{align*}
    \norm{e_h^{j,2}}{} = \norm{u^j-u^{j-1}-k\dot u^j}{} \leq k \int_{t_{j-1}}^{t_j} \norm{\ddot u(s)}{} \,\mathrm{d}s
    \leq k^{3/2}\left(\int_{t_{j-1}}^{t_j} \norm{\ddot u(s)}{}^2 \,\mathrm{d}s\right)^{1/2}.
  \end{align*}
  Combining the latter two estimates we conclude that
  \begin{align*}
    \frac1{k}\sum_{j=1}^n \norm{e_h^j}{}^2 = \OO(h^{2(p+1)} + k^2).
  \end{align*}

  \noindent
  \textbf{Step 7.}
  The trace estimate follows from
  \begin{align*}
    \norm{\Theta(0,\widehat\chi^n)}{V,k} \lesssim k^{-1/2}\norm{e_h^n}{} + \norm{\nabla w^{n-1}}{},
  \end{align*}
  cf.~\eqref{eq:estChiN}, by norm equivalence $\norm{\Theta(0,\widehat\chi^n)}{V,k}\eqsim \norm{\widehat\chi^n}{-1/2,k}$ (Lemma~\ref{lem:euler}) under the assumption $hk^{-1/2}=\OO(1)$.
\end{proof}

\section{Optimal error estimate in weaker norms}\label{sec:weak}

Throughout this section we assume that the coefficients $\AA$, $\bbeta$, $\gamma$ and the domain $\Omega$ are such that 
for given data $f,g\in L^2(\Omega)$ the unique solutions $w,v\in H_0^1(\Omega)$ of the primal problem
\begin{subequations}\label{eq:primalForm}
\begin{align}
  -\div\AA\nabla w + \bbeta\cdot\nabla w + \gamma w &= f \quad\text{in }\Omega, \\
  w|_{\Gamma} &= 0,
\end{align}
\end{subequations}
and the dual problem
\begin{subequations}\label{eq:dualForm}
\begin{align}
  -\div(\AA\nabla v + \bbeta v) + \gamma v &= g \quad\text{in }\Omega, \\
  v|_{\Gamma} &= 0,
\end{align}
\end{subequations}
satisfy the regularity estimates
\begin{align}\label{eq:regest}
  \norm{w}{H^2(\Omega)} + \norm{\AA\nabla\ww}{H^1(\TT)}\lesssim \norm{f}{}, \qquad
  \norm{v}{H^2(\Omega)}\lesssim \norm{g}{}. 
\end{align}

\subsection{Further analysis of elliptic projection}
We need to take a closer look at the elliptic projection operator in order to show higher rates in the $L^2(\Omega)$ norm.
To that end we will consider duality arguments (Aubin--Nitsche trick), see~\cite{BoumaGH_DPGconvRates} for the primal DPG method for the Poisson problem. 
Duality arguments for the DPG method with ultra-weak formulation have been explored in~\cite{SupconvDPG,SupconvDPG19}. 
Contrary to the mentioned works, here we need to develop duality arguments for a non-symmetric bilinear form that defines the elliptic projection. Additionally, the norms in this work depend on the time step which can be arbitrarily small, making the analysis more delicate.

Recall the definition of the elliptic projection from~\eqref{eq:ellipticProjection}: Given $\uu\in U$, define $\EE_h\uu
:= \uu_h\in U_h$ by
\begin{align}\label{eq:ellipticProjectionDef2}
  b(\uu_h,\Theta_h\ww_h) = b(\uu,\Theta_h\ww_h) \quad\forall \ww_h\in U_h, 
\end{align}
where the discrete trial-to-test operator $\Theta_h\colon U_h\to V_h$ is defined through
\begin{align*}
  \ip{\Theta_h\ww_h}{\delta v}_{V,k} = a(\ww_h,\delta v) \quad\forall \delta v\in V_h.
\end{align*}

For the analysis we will use an equivalent representation as a mixed system:
\begin{lemma}\label{lem:mixed}
  Problem~\eqref{eq:ellipticProjectionDef2} is equivalent to the mixed system: Find $(v_h,\uu_h)\in V_h\times U_h$ such that
  \begin{subequations}\label{eq:mixed}
  \begin{align}
    \ip{v_h}{\delta v}_{V,k} + b(\uu_h,\delta v) &= b(\uu,\delta v), \label{eq:mixed:a}\\
    a(\delta\ww,v_h) &= 0
  \end{align}
  \end{subequations}
  for all $(\delta v,\delta\ww)\in V_h\times U_h$. 

  In particular,
  \begin{align*}
    \norm{v_h}{V,k}\lesssim \enorm{\uu-\uu_h}_{k}.
 \end{align*}
\end{lemma}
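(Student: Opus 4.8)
The plan is to prove the equivalence by exhibiting a correspondence between solutions of~\eqref{eq:ellipticProjectionDef2} and of~\eqref{eq:mixed}, the whole argument being the standard rewriting of a practical DPG scheme as a mixed system, supplemented by one direct estimate. The mechanism that drives everything is the defining relation of the trial-to-test operator, $\ip{\Theta_h\ww_h}{\delta v}_{V,k}=a(\ww_h,\delta v)$ for all $\delta v\in V_h$, read where necessary with $\Theta_h$ extended from $U_h$ to all of $U$ by this same identity (legitimate, since its right-hand side is a bounded functional of $\delta v\in V_h$); this relation lets one pass freely between a pairing $b(\cdot,\Theta_h\cdot)$ and a plain $\ip\cdot\cdot_{V,k}$ pairing against an arbitrary member of $V_h$.

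For the forward implication, let $\uu_h\in U_h$ solve~\eqref{eq:ellipticProjectionDef2} and define $v_h\in V_h$ as the $\ip\cdot\cdot_{V,k}$-Riesz representative of the bounded functional $\delta v\mapsto b(\uu-\uu_h,\delta v)$ on $V_h$; then~\eqref{eq:mixed:a} holds by construction. For the second equation I take $\delta\ww\in U_h$ arbitrary, use the definition of $\Theta_h$ to write $a(\delta\ww,v_h)=\ip{\Theta_h\delta\ww}{v_h}_{V,k}$, and then read~\eqref{eq:mixed:a} with the admissible test function $\delta v=\Theta_h\delta\ww\in V_h$ to get $\ip{v_h}{\Theta_h\delta\ww}_{V,k}=b(\uu-\uu_h,\Theta_h\delta\ww)$, which vanishes by~\eqref{eq:ellipticProjectionDef2}. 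Conversely, if $(v_h,\uu_h)$ solves~\eqref{eq:mixed}, then for arbitrary $\ww_h\in U_h$ I insert $\delta v=\Theta_h\ww_h$ into~\eqref{eq:mixed:a} and use the second equation in the form $\ip{v_h}{\Theta_h\ww_h}_{V,k}=a(\ww_h,v_h)=0$ to recover~\eqref{eq:ellipticProjectionDef2}. Well-posedness of the mixed system is then immediate: $\uu_h$ is the unique solution of~\eqref{eq:ellipticProjectionDef2} by Lemma~\ref{lem:b}, and~\eqref{eq:mixed:a} determines $v_h$ from $\uu_h$.

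For the bound $\norm{v_h}{V,k}\lesssim\enorm{\uu-\uu_h}_k$, I test~\eqref{eq:mixed:a} with $\delta v=v_h$, obtaining $\norm{v_h}{V,k}^2=b(\uu-\uu_h,v_h)$, and estimate the four terms of $b(\uu-\uu_h,v_h)$ separately. The diffusion term is bounded by $\norm{\AA^{1/2}\nabla(u-u_h)}{}\,\norm{v_h}{V,k}$ after Cauchy--Schwarz and $\norm{\AA^{1/2}\pwnabla v_h}{}\le\norm{v_h}{V,k}$; the advection and reaction terms are bounded, via Poincar\'e's inequality and $k\le T$, by $\norm{\nabla(u-u_h)}{}\,k^{1/2}\norm{v_h}{V,k}\lesssim\norm{\AA^{1/2}\nabla(u-u_h)}{}\,\norm{v_h}{V,k}$; and the trace term is handled exactly as in the proof of Lemma~\ref{lem:b}, writing $\dual{\widehat\sigma-\widehat\sigma_h}{v_h}_{\cS}=-a((0,\widehat\sigma-\widehat\sigma_h),v_h)=-\ip{\Theta_h(0,\widehat\sigma-\widehat\sigma_h)}{v_h}_{V,k}$, so that $|\dual{\widehat\sigma-\widehat\sigma_h}{v_h}_{\cS}|\le\norm{\Theta_h(0,\widehat\sigma-\widehat\sigma_h)}{V,k}\,\norm{v_h}{V,k}$. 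Summing these bounds and recalling $\enorm{\uu-\uu_h}_k^2=\norm{\AA^{1/2}\nabla(u-u_h)}{}^2+\norm{\Theta_h(0,\widehat\sigma-\widehat\sigma_h)}{V,k}^2$, division by $\norm{v_h}{V,k}$ yields the claim.

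The step I expect to need the most care is the trace term in this last estimate: one must not bound $\dual{\widehat\sigma-\widehat\sigma_h}{v_h}_{\cS}$ directly through Lemma~\ref{lem:traceidentity}, which would produce $\norm{\widehat\sigma-\widehat\sigma_h}{-1/2,k}$ --- a quantity \emph{not} controlled by $\enorm{\uu-\uu_h}_k$, since $\widehat\sigma$ need not be discrete --- but must instead route it through the identity $a((0,\cdot),\cdot)=-\dual\cdot\cdot_{\cS}$ and the definition of $\Theta_h$, which is precisely what reproduces the $\Theta_h(0,\cdot)$ contribution present in the semi-norm $\enorm\cdot_k$.
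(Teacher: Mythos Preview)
Your proof is correct and follows essentially the same route as the paper: the equivalence is established via the Riesz representative $v_h$ of $b(\uu-\uu_h,\cdot)$ and the identity $a(\delta\ww,v_h)=\ip{\Theta_h\delta\ww}{v_h}_{V,k}$, and the final bound is obtained by testing~\eqref{eq:mixed:a} with $\delta v=v_h$ and routing the trace term through $\Theta_h(0,\widehat\sigma-\widehat\sigma_h)$ exactly as the paper does. Your explicit remark that $\Theta_h$ must be read on all of $U$ (so that $\Theta_h(0,\widehat\sigma-\widehat\sigma_h)$ makes sense) is a point the paper leaves implicit, and your warning about why Lemma~\ref{lem:traceidentity} would be the wrong tool here is spot on.
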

\begin{proof}
  Let $\uu_h\in U_h$ be the solution of~\eqref{eq:ellipticProjectionDef2}. 
  Define $v_h\in V_h$ by $\ip{v_h}{\delta v}_{V,k} = b(\uu-\uu_h,\delta v)$ for all $\delta v \in V_h$. Then,
  \begin{align*}
    a(\delta\ww,v_h) = \ip{\Theta_h\delta\ww}{v_h}_{V,k} = b(\uu-\uu_h,\Theta_h\delta\ww) = 0
  \end{align*}
  for all $\delta\ww\in U_h$. 

  To see the other direction, suppose that $(v_h,\uu_h)\in V_h\times U_h$ solves~\eqref{eq:mixed}.
  With the same consideration as above we get that
  \begin{align*}
    0 = a(\delta\ww,v_h) = \ip{\Theta_h\delta\ww}{v_h}_{V,k} = b(\uu-\uu_h,\Theta_h\delta\ww),
  \end{align*}
  for all $\delta\ww\in U_h$ which means that $\uu_h\in U_h$ satisfies~\eqref{eq:ellipticProjectionDef2}. 

  The final estimate follows from~\eqref{eq:mixed:a} and the definition of the norm $\enorm\cdot_k$,
  \begin{align*}
    \norm{v_h}{V,k}^2 &= b(\uu-\uu_h,v_h) = b( (u-u_h,0),v_h) - \dual{\widehat\sigma-\widehat\sigma_h}{v_h}_{\cS}
    = b( (u-u_h,0),v_h) + a( (0,\widehat\sigma-\widehat\sigma_h),v_h) 
    \\
    &= b( (u-u_h,0),v_h) + \ip{\Theta_h(0,\widehat\sigma-\widehat\sigma_h)}{v_h}_{V,k} \lesssim \enorm{\uu-\uu_h}_k
    \norm{v_h}{V,k}.
  \end{align*}
  This concludes the proof.
\end{proof}

\begin{theorem}\label{thm:duality}
  Suppose that problems~\eqref{eq:primalForm},~\eqref{eq:dualForm} satisfy the regularity
  estimates in~\eqref{eq:regest}. Then, for given $\uu=(u,\widehat\sigma)\in U$, the elliptic projection
  $\uu_h:=(u_h,\widehat\sigma_h):=\EE_h\uu\in U_h$ converges at a higher rate in the $L^2(\Omega)$ norm, i.e.,
  \begin{align*}
    \norm{u-u_h}{} \lesssim h \enorm{\uu-\uu_h}_{k}.
  \end{align*}
\end{theorem}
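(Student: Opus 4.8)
The plan is to run an Aubin–Nitsche duality argument adapted to the non-symmetric bilinear form $b$ and the mesh-dependent test norm. Let $e := u-u_h$ and consider the dual problem~\eqref{eq:dualForm} with right-hand side $g := e$, whose unique solution $v\in H_0^1(\Omega)$ satisfies the regularity bound $\norm{v}{H^2(\Omega)}\lesssim \norm{e}{}$. The key identity is that testing the weak form of the dual problem against $e$ produces $\norm{e}{}^2$ on one side and a bilinear expression involving $b$ (or rather its formal adjoint) on the other: writing $\ssigma_v := \AA\nabla v + \bbeta v$ and $\widehat\sigma_v$ its normal trace, one expects something like $\norm{e}{}^2 = b\big((e,\widehat\sigma-\widehat\sigma_h),\, \text{(test object built from } v)\big)$ up to terms that vanish because $u-u_h\in H_0^1$ makes skeleton pairings disappear. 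First I would make this duality identity precise, being careful about which variable plays the test role; since the elliptic projection is defined with optimal test functions $\Theta_h\ww_h$, the natural thing is to write $\norm{e}{}^2$ in terms of $b(\uu-\uu_h, w)$ for a suitable $w\in V$ derived from $v$, and then to insert a discrete approximation of $w$ and exploit Galerkin orthogonality~\eqref{eq:ellipticProjectionDef2}.

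The second main step is to handle the fact that $b(\uu-\uu_h,\cdot)$ is only orthogonal to the image $\Theta_h(U_h)$, not to all of $V_h$ directly in a simple way, so I would use the mixed-system reformulation from Lemma~\ref{lem:mixed}. That lemma gives me the auxiliary variable $v_h\in V_h$ with $\ip{v_h}{\delta v}_{V,k} = b(\uu-\uu_h,\delta v)$ for all $\delta v\in V_h$, together with $\norm{v_h}{V,k}\lesssim \enorm{\uu-\uu_h}_k$. The strategy is then: take the continuous dual solution $v$, split $b(\uu-\uu_h,\Pi v)$ for a Fortin/interpolation operator $\Pi$ landing in $V_h$, replace $b(\uu-\uu_h,\Pi v)$ by $\ip{v_h}{\Pi v}_{V,k}$, and bound this by $\norm{v_h}{V,k}$ times $\norm{\Pi v}{V,k}$; the error incurred by replacing $v$ with $\Pi v$ is controlled by the $H^2$-regularity of $v$ and approximation estimates, producing the crucial factor $h$. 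Concretely I would aim for $\norm{e}{}^2 \lesssim h\,\enorm{\uu-\uu_h}_k \norm{e}{}$ by showing (i) the consistency/duality error term is $\lesssim h\,\enorm{\uu-\uu_h}_k\,\norm{v}{H^2}$ and (ii) $\norm{\Pi v}{V,k}$ or the relevant surrogate is $\lesssim \norm{v}{H^2}\lesssim \norm{e}{}$, where the $h$ comes from approximating $v$ (which has one more derivative) in the piecewise $H^1$-type test norm, whose gradient part costs one power of $h$ against $\norm{v}{H^2}$. Dividing by $\norm{e}{}$ gives the claim.

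I expect the main obstacle to be the $k$-dependence of the test norm $\norm{\cdot}{V,k}^2 = k^{-1}\norm{\cdot}{}^2 + \norm{\AA^{1/2}\pwnabla\cdot}{}^2$: the $k^{-1}$ weight on the $L^2$ part means a naive approximation estimate $\norm{v-\Pi v}{V,k}\lesssim (k^{-1/2}h^2 + h)\norm{v}{H^2}$ is \emph{not} uniformly $\OO(h)$ when $k$ is tiny. So the delicate point is to arrange the argument so that the $k^{-1}$-weighted term either cancels or is absorbed — for instance by testing against $v$ itself rather than an interpolant wherever the $L^2$ part appears, or by using that $u-u_h\in H_0^1(\Omega)$ so the troublesome $L^2(\Omega)$-inner-product contributions can be integrated by parts and re-expressed through $\norm{\nabla e}{}$ and the trace part, for which we already have $\enorm{\uu-\uu_h}_k$ control. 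A clean way is probably to write the duality pairing so that only $b$-type (first-order, $k$-independent) quantities and the skeleton term (handled via $\Theta_h(0,\widehat\sigma-\widehat\sigma_h)$ and its $\enorm{\cdot}_k$-bound) enter, thereby never exposing the bare $k^{-1}\norm{\cdot}{}^2$ factor against an interpolation error of order $h^2$. Once the right splitting is found, the remaining estimates — Fortin-operator stability as in Lemma~\ref{lem:euler}, approximation of $v$ and $\AA\nabla v$ using~\eqref{eq:regest}, and Cauchy–Schwarz — are routine.
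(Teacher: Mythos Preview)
Your overall plan is sound and you have correctly diagnosed the main obstacle: the $k^{-1}$ weight in the test norm threatens to destroy the duality argument. However, the fixes you sketch are too vague, and the concrete route you outline does not close. If you choose $\Pi v$ to be an $H_0^1$-conforming interpolant of the dual solution and write
\[
  \norm{e}{}^2 = b(\uu-\uu_h,v-\Pi v) + \ip{v_h}{\Pi v}_{V,k},
\]
the first term is fine, but for the second you propose to bound $|\ip{v_h}{\Pi v}_{V,k}|\le \norm{v_h}{V,k}\norm{\Pi v}{V,k}$, and $\norm{\Pi v}{V,k}$ contains $k^{-1/2}\norm{\Pi v}{}$, which blows up as $k\to 0$. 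If instead you use the second mixed equation $a((\Pi v,0),v_h)=0$ to eliminate the $k^{-1}$ contribution, you obtain $\ip{v_h}{\Pi v}_{V,k} = -\ip{\bbeta\cdot\nabla\Pi v+\gamma\Pi v}{v_h}$, which is bounded only by $k^{1/2}\norm{\nabla v}{}\,\norm{v_h}{V,k}$. This yields $\norm{e}{}\lesssim (h+k^{1/2})\enorm{\uu-\uu_h}_k$, not the stated $\norm{e}{}\lesssim h\enorm{\uu-\uu_h}_k$; the extra $k^{1/2}$ does not go away by integration by parts or by ``testing against $v$ itself''.

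The missing idea, which the paper supplies, is to lift $v$ through the \emph{continuous} trial-to-test operator: write $v=\Theta\ww$ with $\ww=(v,0)+\ww^*$, where $\ww^*=(w^*,\widehat\chi^*)$ and $w^*\in H_0^1(\Omega)$ solves
\[
  -\div\AA\nabla w^* + \bbeta\cdot\nabla w^* + \gamma w^* + \tfrac1k w^* = -\bbeta\cdot\nabla v - \gamma v,
\]
with $\widehat\chi^*$ the normal trace of $\AA\nabla w^*$. Then the identity $\norm{e}{}^2 = b(\uu-\uu_h,v) - \ip{v}{v_h}_{V,k} + a(\ww,v_h)$ holds, and the full orthogonality $a(\delta\ww,v_h)=0$ for $\delta\ww\in U_h$ lets you subtract a discrete approximation of $\ww$, not just of $v$. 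The problematic residual becomes $a(\ww^*-\delta\ww^*,v_h)$, and the $k^{-1}w^*$ term in the auxiliary PDE gives precisely the $k$-uniform bounds $k^{-1}\norm{w^*}{}+k^{-1/2}\norm{\nabla w^*}{}\lesssim \norm{\nabla v}{}$ (by testing with $w^*$ and bootstrapping), so that $k^{-1/2}\norm{w^*-\delta w^*}{}+\norm{\ww^*-\delta\ww^*}{U,k}\lesssim h\norm{e}{}$ after invoking the regularity assumption~\eqref{eq:regest}. This is the mechanism that converts the stray $k^{1/2}$ into an honest $h$; without it the argument stalls at the weaker estimate.
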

\begin{proof}
  \textbf{Step 1.}
  Let $v\in H_0^1(\Omega)$ denote the solution of the dual problem~\eqref{eq:dualForm} with $g= u-u_h$.
  Integration by parts and the definition of the bilinear form $b(\cdot,\cdot)$ yield
  \begin{align*}
    \norm{u-u_h}{}^2 &= \ip{u-u_h}{-\div(\AA\nabla v + \bbeta v) + \gamma v} 
    \\ &= \ip{\AA\nabla(u-u_h)}{\nabla v} +
    \ip{\bbeta\cdot\nabla(u-u_h)+\gamma (u-u_h)}{v} = b(\uu-\uu_h,v). 
  \end{align*}

  \noindent
  \textbf{Step 2.} 
  We characterise $v=\Theta \ww$ where $\Theta\colon U\to V$ is the continuous trial-to-test operator defined through
  \begin{align*}
    \ip{\Theta \ww}{\delta v}_{V,k} = a(\ww,\delta v) \quad\forall \delta v \in V. 
  \end{align*}
  From the latter definition we obtain
  \begin{align*}
    \ip{v}{\delta v}_{V,k} = \frac{1}{k} \ip{v}{\delta v} + \ip{\AA\nabla v}{\nabla_\TT \delta v} = a((v,0),\delta v) -
    \ip{\bbeta\cdot\nabla v+\gamma v}{\delta v} \quad\forall \delta v\in V.
  \end{align*}
  It follows that $\ww = (v,0) + \ww^*$ where $\ww^* = (w^*,\widehat\chi^*)\in U$. The component $w^*\in
  H_0^1(\Omega)$ solves
  \begin{align}\label{eq:wStar}
    -\div\AA\nabla w^* + \bbeta\cdot\nabla w^* + \gamma w^* + \frac1{k} w^* = -\bbeta\cdot\nabla v - \gamma v
  \end{align}
  and $\widehat\chi^*$ is given by $\widehat\chi^*|_{\partial T} = \AA\nabla w^*\cdot\normal_K|_{\partial K}$ for all $K\in\TT$.

  \noindent
  \textbf{Step 3.} 
  Let $v_h\in V_h$ be such that $(v_h,\uu_h)\in V_h\times U_h$ is the solution of the equivalent mixed
  system~\eqref{eq:mixed}. Then, we see with Step~1 and using~\eqref{eq:mixed} that
  \begin{align}
    \norm{u-u_h}{}^2 &= b(\uu-\uu_h,v) = b(\uu-\uu_h,v) - \ip{v}{v_h}_{V,k} + a(\ww,v_h) 
    \\ &= b(\uu-\uu_h,v-\delta v) - \ip{v-\delta v}{v_h}_{V,k} + a(\ww-\delta \ww,v_h)
  \end{align}
  for all $(\delta v,\delta\ww)\in V_h\times U_h$. 

  We choose $\delta v \in \PP^{p+1}(\Omega)\cap H_0^1(\Omega)$ to be the best approximation of $v$ in the
  $H_0^1(\Omega)$ norm and $\delta\ww = (\delta v,0) + \delta\ww^*$ where $\delta\ww^*\in U_h$ will be chosen below.
  The first term is estimated (note that $v-\delta v \in H_0^1(\Omega)$) using the regularity of $v$ by
  \begin{align*}
    |b(\uu-\uu_h,v-\delta v)| \lesssim \norm{\nabla(u-u_h)}{} \norm{\nabla(v-\delta v)}{} \lesssim
    h \norm{v}{H^2(\Omega)} \norm{\nabla(u-u_h)}{} \lesssim h \norm{u-u_h}{} \enorm{\uu-\uu_h}_{k}. 
  \end{align*}
  For the remaining terms, note that $\ww-\delta\ww = (v-\delta v,0) + (\ww^*-\delta\ww^*)$.
  Then, using the estimate $\norm{v_h}{V,k}\lesssim \enorm{\uu-\uu_h}_{k}$ from Lemma~\ref{lem:mixed}
  and boundedness~\eqref{eq:a:bdd} of $a$,
  \begin{align*}
    &-\ip{v-\delta v}{v_h}_{V,k} + a(\ww-\delta\ww,v_h) 
    \\&\qquad\qquad= \ip{\bbeta\cdot\nabla(v-\delta v)+\gamma(v-\delta v)}{v_h} + a(\ww^*-\delta\ww^*,v_h)
    \\
    &\qquad\qquad\lesssim k^{1/2}\norm{\nabla(v-\delta v)}{}\norm{v_h}{V,k} + (k^{-1/2}\norm{w^*-\delta w^*}{}+\norm{\ww^*-\delta\ww^*}{U,k})\norm{v_h}{V,k}
    \\
    &\qquad\qquad\lesssim h \norm{u-u_h}{}\norm{\uu-\uu_h}{U,k} + (k^{-1/2}\norm{w^*-\delta w^*}{}+\norm{\ww^*-\delta\ww^*}{U,k})
    \enorm{\uu-\uu_h}_{k}.
  \end{align*}
  To finish the proof it remains to show the estimate $(k^{-1/2}\norm{w^*-\delta w^*}{}+\norm{\ww^*-\delta\ww^*}{U,k})\lesssim
  h\norm{u-u_h}{}$. Basically, this follows from the weak form of the PDE defined in Step~2:
  Let $\delta \ww^*=(\delta w^*,\delta\widehat\chi^*)\in U_h$ with $\delta w^*$ being the best approximation of $w^*$ with respect to $\norm{\nabla(\cdot)}{}$ and
  $\delta \widehat\chi^*|_{\partial K} = \delta \cchi^*\cdot \normal_K|_{\partial K}$ for all $K\in\TT$. 
  Here, $\delta\cchi^*$ denotes the Raviart--Thomas projection of $\cchi^*=\AA\nabla w^*$ into the Raviart--Thomas space of order $p$. 

  The weak form of PDE~\eqref{eq:wStar} shows that
  \begin{align*}
    \frac1{k}\norm{w^*}{}^2 + \norm{\AA^{1/2} w^*}{}^2 &\leq \frac1{k}\ip{w^*}{w^*} + \ip{\AA\nabla w^*}{\nabla w^*} +
    \ip{\bbeta\cdot\nabla w^*+\gamma w^*}{w^*} \\
    &= \ip{-\bbeta\cdot\nabla v-\gamma v}{w^*} \lesssim \norm{\nabla v}{} \norm{w^*}{}
  \end{align*}
  which implies first $k^{-1}\norm{w^*}{}\lesssim \norm{\nabla v}{}$. Bootstrapping this estimate then shows
  \begin{align*}
    k^{-1}\norm{w^*}{} + k^{-1/2}\norm{\nabla w^*}{} \lesssim \norm{\nabla v}{}\lesssim \norm{u-u_h}{}.
  \end{align*}
  Moreover, PDE~\eqref{eq:wStar} together with our regularity assumptions~\eqref{eq:regest}, and using the latter estimate show that
  \begin{align*}
    \norm{w^*}{H^2(\Omega)} \lesssim \norm{-\bbeta\cdot\nabla v-\gamma v-k^{-1}w^*}{} \lesssim \norm{\nabla v}{}\lesssim \norm{u-u_h}{}.
  \end{align*}
  Therefore, standard approximation results and the aforegoing stability analysis prove that
  \begin{align*}
    k^{-1/2}\norm{w^*-\delta w^*}{} + \norm{\nabla(w^*-\delta w^*)}{} \lesssim hk^{-1/2}\norm{\nabla w^*}{} + h\norm{w^*}{H^2(\Omega)}
    \lesssim h \norm{u-u_h}{}.
  \end{align*}
  By our regularity assumptions we have that $\cchi^*=\AA\nabla w^*\in H^1(\TT)^d\cap \Hdivset\Omega$ so that the trace terms can be estimated in a similar fashion as in~\cite[Theorem~5]{SupconvDPG}: From Lemma~\ref{lem:traceidentity} we know that
  \begin{align*}
    \norm{\widehat\chi^*-\delta\widehat\chi^*}{-1/2,k} \eqsim \sup_{0\neq \widetilde v\in V} \frac{\dual{\widehat\chi^*-\delta\widehat\chi^*}{\widetilde v}_{\cS}}{\norm{\widetilde v}{V,k}}.
  \end{align*}
  Then, integration by parts yields
  \begin{align*}
    \dual{\widehat\chi^*-\delta\widehat\chi^*}{\widetilde v}_{\cS} = \ip{\cchi^*-\delta\cchi^*}{\pwnabla \widetilde v} + \ip{\div\cchi^*}{(1-\Pi^p)\widetilde v}
    \lesssim (h \norm{\cchi^*}{H^1(\TT)} + h \norm{\div\cchi^*}{})\norm{\pwnabla \widetilde v}{}
  \end{align*}
  where $\Pi^p$ is the $L^2$ projection on $\PP^p(\TT)$.
  For details on the arguments used we refer to~\cite[Proof of Theorem~5]{SupconvDPG}.
  We stress that the estimates are independent of $k$. 
  Using the bounds from Step~3, this shows that
  \begin{align*}
    \min_{\delta \widehat\chi\in H^{-1/2}(\cS)}\norm{\widehat\chi^*-\delta\widehat\chi}{-1/2,k} \lesssim h \norm{u-u_h}{},
  \end{align*}
  which concludes the proof. 
\end{proof}

\subsection{Error analysis in the $L^2(\Omega)$ norm}

\begin{theorem}
  Let $\uu_h^n\in U_h$ denote the solution of~\eqref{eq:euler}.
  Suppose that the components of $\uu(t;\cdot)$ are sufficiently regular.
  Under the additional regularity assumptions~\eqref{eq:regest} the solution satisfies
  \begin{align*}
    \norm{u^n-u_h^n}{} = \OO(h^{p+2}+k) + \OO(\norm{u^0-u_h^0}{}).
  \end{align*}
\end{theorem}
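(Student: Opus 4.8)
The plan is to carry the classical Wheeler--Thom\'ee $L^2(\Omega)$ argument for backward Euler Galerkin schemes over to the present DPG setting, the new ingredient being the sharper $L^2$ estimate for the elliptic projection from Theorem~\ref{thm:duality}. As in the proof of Theorem~\ref{thm:energyerror} I would write $\ww^n=(w^n,\widehat\chi^n):=\EE_h\uu^n-\uu_h^n$ and split
\[
  u^n-u_h^n=\bigl(u^n-\EE_h^1\uu^n\bigr)+w^n .
\]
For the first term the energy estimate is no longer needed: Theorem~\ref{thm:duality} combined with Corollary~\ref{cor:convRatesEnergyNorm} (applied to $\uu(t_n;\cdot)$) gives $\norm{u^n-\EE_h^1\uu^n}{}\lesssim h\,\enorm{\uu^n-\EE_h\uu^n}_k=\OO(h^{p+2})$, and this is precisely where the gain from $\OO(h^{p+1})$ to $\OO(h^{p+2})$ comes from.

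For $w^n$ I would use the error equation~\eqref{eq:erroreq}, but test with the optimal test function $v_\mathrm{opt}=\Theta_h(w^n,0)=w^n+\widetilde w^n$ (admissible, since $(w^n,0)\in U_h$) rather than with $w^n-w^{n-1}$. Using $\dual{\widehat\chi^n}{w^n}_\cS=0$ and $\ip{\bbeta\cdot\nabla w^n+\gamma w^n}{w^n}\ge0$ one gets the coercive backward Euler inequality
\[
  \tfrac1{2k}\bigl(\norm{w^n}{}^2-\norm{w^{n-1}}{}^2\bigr)+\norm{\AA^{1/2}\nabla w^n}{}^2
  \le \tfrac1k\ip{e_h^n}{w^n+\widetilde w^n}-\tfrac1k\ip{w^n-w^{n-1}}{\widetilde w^n}-b(\ww^n,\widetilde w^n),
\]
with $e_h^n$ as in~\eqref{def:ehn}. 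The terms on the right all involve the bubble part $\widetilde w^n\in V_h$, for which~\eqref{eq:wTildeK} (with $\ww_h=(w^n,0)$) yields $\norm{\widetilde w^n}{V,k}\lesssim k^{1/2}\norm{\nabla w^n}{}$ and hence $\norm{\widetilde w^n}{}\lesssim k\norm{\nabla w^n}{}$; the skeleton term is rewritten as $\dual{\widehat\chi^n}{\widetilde w^n}_\cS=\ip{\Theta_h(0,\widehat\chi^n)}{\widetilde w^n}_{V,k}$ and controlled with~\eqref{eq:estChiN}, i.e.\ $\norm{\Theta_h(0,\widehat\chi^n)}{V,k}\lesssim k^{-1/2}\norm{e_h^n}{}+\norm{\nabla w^{n-1}}{}$. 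Following the absorption strategy of Steps~4--5 in the proof of Theorem~\ref{thm:energyerror} (first a fixed Young parameter, then $k\le k_0$ small) every $\norm{\nabla w^n}{}^2$-contribution disappears into the left-hand side, and with $\norm{w^n-w^{n-1}}{}\lesssim\norm{e_h^n}{}+k\norm{\nabla w^{n-1}}{}$ (again from that proof) and $\norm{\AA^{1/2}\nabla\cdot}{}\eqsim\norm{\nabla\cdot}{}$ one is left with an inequality of the form
\[
  \norm{w^n}{}^2-\norm{w^{n-1}}{}^2\lesssim \norm{e_h^n}{}\,\norm{w^n}{}+\norm{e_h^n}{}^2+k^{2}\norm{\nabla w^{n-1}}{}^2 .
\]

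Now I would bound $\norm{\nabla w^{n-1}}{}$ by the energy estimate (Theorem~\ref{thm:energyerror}, re-run with the sharper $e_h$-bound established next), so that $k^2\norm{\nabla w^{n-1}}{}^2$ is of lower order; not applying Young to $\norm{e_h^n}{}\norm{w^n}{}$ but instead the elementary implication ``$a^2\le b^2+ca\Rightarrow a\le b+c$'' turns the last display into $\norm{w^n}{}\lesssim\norm{w^{n-1}}{}+\norm{e_h^n}{}+(\text{lower order})$, and iterating from $n=0$ gives $\norm{w^n}{}\lesssim\norm{w^0}{}+\sum_{j=1}^n\norm{e_h^j}{}+(\text{lower order})$. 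The sum is handled as in Step~6 of the proof of Theorem~\ref{thm:energyerror}: writing $e_h^j=e_h^{j,1}+e_h^{j,2}$ and estimating the projection part $e_h^{j,1}=\int_{t_{j-1}}^{t_j}\bigl(\EE_h^1\dot\uu(s)-\dot u(s)\bigr)\,\mathrm{d}s$ now with the \emph{$L^2$} bound from Theorem~\ref{thm:duality}, $\norm{\EE_h^1\dot\uu(s)-\dot u(s)}{}\lesssim h\,\enorm{\dot\uu(s)-\EE_h\dot\uu(s)}_k=\OO(h^{p+2})$, and expanding $e_h^{j,2}=u^j-u^{j-1}-k\dot u^j$ by Taylor, a Cauchy--Schwarz in time gives $\sum_j\norm{e_h^j}{}=\OO(h^{p+2}+k)$ and $\tfrac1k\sum_j\norm{e_h^j}{}^2=\OO(h^{2(p+2)}+k^2)$ (the latter being what makes the re-run of Theorem~\ref{thm:energyerror} deliver the higher order). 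Finally $\norm{w^0}{}\le\norm{u^0-u_h^0}{}+\norm{u^0-\EE_h^1\uu^0}{}=\norm{u^0-u_h^0}{}+\OO(h^{p+2})$, and assembling the pieces yields the claim.

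I expect the main obstacle to be the bookkeeping in the absorption step. Since the test norm $\norm{\cdot}{V,k}$ and the inner product $\ip{\cdot}{\cdot}_{V,k}$ carry negative powers of $k$, one has to verify that every cross term generated by the bubble $\widetilde w^n$ and the skeleton unknown $\widehat\chi^n$ is genuinely of the form $(\text{small})\,\norm{\AA^{1/2}\nabla w^n}{}^2+(\text{controllable by }e_h^n\text{ and }w^{n-1})$, and -- the most delicate point -- that the $\norm{\nabla w^{n-1}}{}$-terms, which can only be controlled through the $H^1$ error estimate and hence ultimately through $\norm{\nabla(u^0-u_h^0)}{}$, enter with enough powers of $k$ that they neither spoil the $h^{p+2}$-rate nor the purely $L^2$ dependence on the initial error.
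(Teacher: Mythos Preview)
Your decomposition, the treatment of $u^n-\EE_h^1\uu^n$ via Theorem~\ref{thm:duality}, and the estimate of $\sum_j\norm{e_h^j}{}$ are all correct and coincide with the paper. The gap is precisely the point you flag as ``most delicate'': the $\norm{\nabla w^{n-1}}{}$-terms do \emph{not} enter with enough powers of $k$. After absorption you arrive (essentially) at
\[
  \norm{w^n}{}\le \norm{w^{n-1}}{}+C\norm{e_h^n}{}+Ck\,\norm{\nabla w^{n-1}}{},
\]
and iterating produces $\sum_{j=1}^n k\,\norm{\nabla w^{j-1}}{}\le T\max_j\norm{\nabla w^{j}}{}$ --- the factor $k$ is consumed by the number of time steps. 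Even after re-running Theorem~\ref{thm:energyerror} with the sharper $e_h$-bound, the Gronwall inequality still carries $\norm{\nabla w^0}{}$, and
\[
  \norm{\nabla w^0}{}\le\norm{\nabla(u^0-\EE_h^1\uu^0)}{}+\norm{\nabla(u^0-u_h^0)}{}
\]
contributes an $\OO(h^{p+1})$ pollution from the first term and an $H^1$ (not $L^2$) initial-error dependence from the second. Both contradict the statement; the second need not even be finite, since the scheme only assumes $u_h^0\in L^2(\Omega)$. Your route also inherits a step-size restriction $k\le k_0$ that the theorem does not carry.

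The paper avoids all of this by a different choice of test function: instead of $\Theta_h(w^n,0)$ it tests~\eqref{eq:erroreqOld} with $v_\mathrm{opt}=\Theta_h\ww^n$ (the \emph{full} $\ww^n$, including $\widehat\chi^n$). Then the left-hand side is $a(\ww^n,\Theta_h\ww^n)=\norm{\Theta_h\ww^n}{V,k}^2$, Lemma~\ref{lem:euler} gives $k^{-1}\norm{w^n}{}^2\le\norm{\Theta_h\ww^n}{V,k}^2$, and bounding the right-hand side by $k^{-1/2}\norm{e_h^n+w^{n-1}}{}\,\norm{\Theta_h\ww^n}{V,k}$ yields the exact recursion $\norm{w^n}{}\le\norm{e_h^n}{}+\norm{w^{n-1}}{}$ with constant~$1$. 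No bubble $\widetilde w^n$, no estimate~\eqref{eq:estChiN} for $\widehat\chi^n$, no Young parameter, no $k_0$, and --- crucially --- no recourse to the $H^1$ result, so the initial error enters only through $\norm{w^0}{}\le\norm{u^0-u_h^0}{}+\OO(h^{p+2})$.
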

\begin{proof}
  With the elliptic projection operator $\EE_h$ (see~\eqref{eq:ellipticProjection}) we consider the splitting
  \begin{align*}
    u^n-u_h^n = (u^n-\EE_h^1\uu^n)+(\EE_h^1\uu^n-u_h^n).
  \end{align*}
  By Theorem~\ref{thm:duality} and Corollary~\ref{cor:convRatesEnergyNorm} we get that
  \begin{align*}
    \norm{u^n-\EE_h^1\uu^n}{} = \OO(h^{p+2}).
  \end{align*}
  Writing $\ww^n = (w^n,\widehat\chi^n):= \EE_h\uu^n-\uu_h^n$ we recall the error equation from~\eqref{eq:erroreq},
  \begin{align}
    \frac1{k}\ip{w^n}{v_\mathrm{opt}} + b(\ww^n,v_\mathrm{opt}) = \frac1{k}\ip{e_h^n+w^{n-1}}{v_\mathrm{opt}} \quad\forall v_\mathrm{opt} \in \Theta_h(U_h).
  \end{align}
  We test with $v_\mathrm{opt} = \Theta_h \ww^n$ and using Lemma~\ref{lem:euler} we infer that
  \begin{align*}
    k^{-1}\norm{w^n}{}^2 \leq a(\ww^n,\Theta_h\ww^n) = \frac1{k}\ip{e_h^n+w^{n-1}}{\Theta_h\ww^n} \leq k^{-1/2}\norm{e_h^n+w^{n-1}}{}a(\ww^n,\Theta_h\ww^n)^{1/2},
  \end{align*}
  and further  that
  \begin{align*}
    \norm{w^n}{} \leq \norm{e_h^n}{} + \norm{w^{n-1}}{}.
  \end{align*}
  Iterating the arguments yields
  \begin{align*}
    \norm{w^n}{} \leq \sum_{j=1}^n \norm{e_h^j}{} + \norm{w^0}{}.
  \end{align*}
  The last term is estimated with the triangle inequality together with Theorem~\ref{thm:duality} and Corollary~\ref{cor:convRatesEnergyNorm} to obtain that
  \begin{align*}
    \norm{w^0}{} \leq \norm{u^0-u_h^0}{} + \norm{u^0-\EE_h^1\uu^0}{} = \norm{u^0-u_h^0}{} + \OO(h^{p+2}).
  \end{align*}
  The estimate
  \begin{align*}
    \sum_{j=1}^n \norm{e_h^j}{} = \OO(h^{p+2}+k)
  \end{align*}
  is shown by following the very same argumentation as in~\cite[Theorem~1.5]{thomee}. 
  For the sake of completeness we repeat the main steps which are also similar to the ones aready presented in Step~7 of the proof of Theorem~\ref{thm:energyerror}.
  
  From Step~7 of the proof of Theorem~\ref{thm:energyerror} we recall the splitting
  \begin{align*}
    e_h^j := e_h^{j,1}+e_h^{j,2} := [\EE_h^1(\uu^j-\uu^{j-1})-(u^j-u^{j-1})] + [u^j-u^{j-1}-k\dot u^j].
  \end{align*}
  With the same arguments (without using a Cauchy--Schwarz inequality in the time variable) we get that
  \begin{align*}
    \norm{e_h^{j,2}}{} \leq k \int_{t_{j-1}}^{t_j} \norm{\ddot u(s)}{}\,\mathrm{d}s. 
  \end{align*}
  Similarily, for the first contribution we apply Corollary~\ref{cor:convRatesEnergyNorm} and Theorem~\ref{thm:duality} to get that
  \begin{align*}
    \norm{e_h^{j,1}}{} \lesssim h^{p+2}\int_{t_{j-1}}^{t_j} C(\dot{\uu}(s)) \,\mathrm{d}s,
  \end{align*}
  where $C(\dot{\uu}(s))$ depends on higher-order Sobolev norms of $\dot\uu(s)$.
  Summing over $j$ finishes the proof.
\end{proof}

\bibliographystyle{abbrv}
\bibliography{literature}

\end{document}